\newtheorem{theorem}{Theorem}[section]
\newtheorem{lemma}[theorem]{Lemma}
\newtheorem{proposition}[theorem]{Proposition}
\newtheorem{corollary}{Corollary}[section]
\newtheorem{remark}[theorem]{Remark}
\theoremstyle{definition}
\newtheorem{definition}[theorem]{Definition}
\numberwithin{equation}{section}
\def\dr{\ar@{->}[r]}
\def\X{\mathcal{X}}
\def\Ext{\mbox{Ext}}
\def\Hom{\mbox{Hom}}
\def\Ker{\mathsf{Ker}\hspace{.01in}}
\newcommand{\ann}{\mathsf{ann}}
\newcommand{\gl}{\mathsf{l.gl.dim}\hspace{.01in}}
\newcommand{\pd}{\mathsf{pd}}
\def\Mod{\mathsf{Mod}\hspace{.01in}}
\def\Tor{\mbox{Tor}}
\def\dim{\mathsf{dim_{K}}}
\def\sup{\mathsf{sup}}
\def\cd{\mathsf{cd}}
\def\wpd{\mathsf{wpd}}
\begin{document}

\title{Tensor Products of Flat Cotorsion Modules and Cotorsion Dimension}

%    Information for first author
\author{Hu Yonggang}
%    Address of record for the research reported here
\address{China Foreign Affairs University, Xicheng District, Beijing, 100037}
%    Current address
\email{huyonggang@cfau.edu.cn}

%    \thanks will become a 1st page footnote.
%\thanks{The first author was supported in part by NSF Grant \#000000.}

%    Information for second author
\author{Ma Linyu}
\address{China University of Mining and Technology (Beijing), School of Science, Haidian District, Beijing, 100083}
\email{2466352848@qq.com}
\thanks{The first author was supported by National Natural Science Foundation of China (Grant  No. 12401049) and Shanxi Provincial Basic Research Program (Free Exploration) Youth Scientific Research Project (Grant  No. 202403021212001). The author second was supported by the Fundamental Research Funds for the Central Universities (No. 2024ZKPYLX01) and the Natural Science Foundation of China (Grant Nos. 12271321).}
\author{Wang Xintian}
\address{China University of Mining and Technology (Beijing), School of Science, Haidian District, Beijing, 100083}
\email{201803@cumtb.edu.cn}

%\email{two@maths.univ.edu.au}
%    General info
\subjclass[2020]{Primary 16D80, 16E10; Secondary 18G15, 18G20}

%\date{\today}

%\dedicatory{This paper is dedicated to our advisors.}

\keywords{Flat cotorsion modules; Tensor products; Cotorsion dimension.}

\begin{abstract}
This paper studies the tensor product of flat cotorsion modules. Let~$R$~and $S$ be~$k$-algebras. We prove that both~$R$-module\ $M$ and~$S$-module\ $N$ are flat cotorsion modules if and only if~$M\otimes_{k}  N$ is a flat cotorsion~$R\otimes_{k} S $-module. Based on this conclusion, we provide a lower bound for the global cotorsion dimension of the tensor product algebra~$R\otimes_{k}S $ under appropriate conditions.
\end{abstract}

\maketitle

%% The correct journal style for \specialsection is all uppercase; a known bug
%% in amsart.cls prevents this, so input must be uppercase until it is %fixed.
%\specialsection*{This is a Special Section Head}

%%%%%%%%%%%%%%%%%%%%%%%%%%%%%%%%%%%%%%%%%%%%%%%%%%%%%%%%%%%%%%%%%%%%%%%%
%\footnote{Here is an example of a footnote. Notice that this footnote
%text is running on so that it can stand as an example of how a %footnote
%with separate paragraphs should be written.
%\par
%And here is the beginning of the second paragraph.}%
%%%%%%%%%%%%%%%%%%%%%%%%%%%%%%%%%%%%%%%%%%%%%%%%%%%%%%%%%%%%%%%%%%%%%%%%

\section{ introduction}

Cotorsion pairs are important concepts in abelian categories and triangulated categories. First appeared in the literature~\cite{Salce}, they are closely related to model structures~\cite{Hovey01,Hovey02} and have significant applications in homological algebra and representation theory, such as in~\cite{Krause,Angeleri}. A cotorsion pair consists of two subcategories that are orthogonal with respect to the Ext functor. \par
The flat cover conjecture was first proposed by\ Enochs~\cite{Enoch81}. This conjecture states that every module has a flat cover. Enochs also proved that this is equivalent to every module having a flat precover. Let~$R$~ be a ring, and $\X$ be a class of~$R$-modules. An~$\X$-precover of an~$R$-module~$M$~\cite{Enoch81} is a morphism~$ f :X\rightarrow M$ with~$X\in\X$, such that for any morphism $g : Y \rightarrow M$ with~$Y\in\X$, there exists a morphism~$ h : Y \rightarrow X$ satisfying~$f h = g$. Dually, one can define an~$\X$-preenvelope. Cotorsion modules, introduced by Enochs in~\cite{Enochs:2}, generalize the concept of cotorsion groups defined by~Harrison~\cite{Harrison} and Fuchs~\cite{Fuchs}. Enochs' definition of the cotorsion modules differs from that of Matlis in~\cite{Matlis}. Bican,~El~Bashir, and~Enochs~proved in~\cite{Bican}~that the class of flat modules and the class of cotorsion modules form a complete cotorsion pair. This result completely resolves the famous ``flat cover conjecture". Moreover, the core of the flat cotorsion pair, i.e.the intersection of the flat class and the cotorsion class (also known as flat cotorsion modules)-exhibits good homological properties~\cite{Nakamura}, similar to projective and injective objects. In this context, Mao and Ding introduced the concept of cotorsion dimension in~\cite{Mao:2}, whitch measures how far a ring is from being a perfect ring. In~\cite{Mao:2}, a lower bound for the cotorsion dimension of a ring is given, and the change of cotorsion dimension under ring extensions is discussed.\par
The study of homological dimensions in tensor product algebras has a long history. As early as the 1950s,  Eilenberg, Auslander, and Nakayama published a series of works on the homological dimensions of tensor product algebras in the  journal \textit{Nagoya Mathematical Journal}~\cite{Auslander1,Auslander2,Eilenberg1,Eilenberg2}. Eilenberg considered the problem of homological dimensions over tensor product algebras to be ``surprisingly difficult''~\cite{Eilenberg2}. Despite this, many important conclusions about the homology and representation theory of tensor product algebras have been established. For instance, in recent years, the higher-order representation theory of tensor product algebras has attracted scholars' attention~\cite{Pasquali,Pasquali2}. In their foundational contribution to homological dimensions, Eilenberg, Rosenberg, and Zelinsky established critical inequalities for projective dimensions in tensor products: Let $M$ and $N$ be modules over $k$-algebras $R$ and $S$. The projective dimension of their tensor product satisfies $$\pd(M)+\wpd(N)\leq \pd(M\otimes N)\leq \pd(M)+\pd(N),$$ where $\wpd$
and $\pd$ denote the flat dimension and projective dimension, respectively. Zhou further proved in \cite{Zhou:3} that under certain conditions on $k$-algebras $R$ and $S$, the inequality $\gl(R \otimes S) \geq \gl(R) + \gl(S)$ holds, where $\gl$ denotes the left global dimension of the algebra. Since flat-cotorsion modules share similar properties with projective-injective modules, it is natural to consider the cotorsion dimension of tensor product algebras. The main results of this paper are as follows:

\textbf{Theorem A}: Let $M$ be an $R$-module and $N$ be an $S$-module. Then $M \otimes N$ is a flat-cotorsion $R \otimes S$-module if and only if both $M$ and $N$ are flat-cotorsion modules.

\textbf{Theorem B}: Let $M$ be an $R$-module and $N$ be an $S$-module. The following statements are equivalent:
\begin{itemize}
  \item[\upshape{(P1)}] If~$\cd(M)=1$,~$\cd(N)=1$, then~ $\cd(M\otimes N)= 2$.
  \item[\upshape{(P2)}]If~$\cd(M)=m$,~$\cd(N)=n$, where~$m,~n\geq 1$ is a positive integer, then~ $\cd(M\otimes N)= n+m$.
\end{itemize}

In particular, when (P1)  was established, then~$l.cot.D(R\otimes S)\geq l.cot.D(R)+l.cot.D(S)$.\par
The structure of this paper is as follows. The second section recalls related concepts and important conclusions. In section 3, the tensor product property of cotorsion is discussed. In section 4, we study the cotorsion dimension of tensor product algebras.

\par

\section{ preliminaries}

This article always assumes that\ $k$ is a field, \ $R$ and\ $S$ are\ $k$-algebras.  All modules are left modules unless otherwise specified.

 Any two\ $k$-algebras\ $R$ and $S$, the tensor product\ $R\otimes_k S$ is a\ $k$-module equipped with a multiplication$$ (r_1\otimes s_1)(r_2\otimes s_2)=(r_1r_2)\otimes(s_1s_2),$$
which converts\ $R\otimes_k S$ into a\ $k$-algebras. For the convenience of writing, we always write\ $R\otimes_k S$ briefly as\ $R\otimes S$.

Let\ $R$,\ $S$ be\ $k$-algebras. Arbitrary\ $\Lambda$-modules\ $X$ is naturally\ $k$-modules. It is often necessary to consider that\ $X$ is a\ $S$-module, in this case assuming:
\begin{itemize}
  \item[(1)]    The operators of\ $R$ and\ $S$ on\ $X$ is commute, i.e.$r(s x)$=$s(r x)$;
  \item[(2)]  The structure of\ $X$ as a\ $k$-module induced by\ $R$ is the same as that induced by\ $S$.
\end{itemize}

In particular, suppose~$X$  is a left\ $R$-module and a left\ $S$-module~(situation\ $_{\Lambda-\Gamma}X)$. Define~$R\otimes S$-module on~$X$
$$(\lambda\otimes\gamma)a=\lambda(\gamma a)=\gamma(\lambda a).$$
Then\ $X$ is a left\ $R\otimes S$-module.
The converse is also clear: every left\ $R\otimes S$-module is obtained in this way from a unique\ $R$-$S$-module. The same applies with ``left" replaced by ``right".

 If\ $R$ and\ $S$ are\ $k$-algebras, for any\ $a\in R$, $b\in S$, there exists a twist map\ $t: R\otimes S\rightarrow S\otimes R$, such that\ $t(a\otimes b)=b\otimes a$. In fact, $t$ is an isomorphism of \ $k$-algebras.
\begin{proposition}{\cite[Lemma 2,3]{Zhou:3}}\label{prop0}
Let
\begin{equation}\label{1-1}
  0\longrightarrow X \xrightarrow{d_{0}} C_{0} \xrightarrow{d_{1}} C_{1}\longrightarrow 0
\end{equation}
be a short exact sequence in\ $\Mod R$, and
\begin{equation}\label{1-1}
  0\longrightarrow Y \xrightarrow{\partial_{0}} D_{0} \xrightarrow{\partial_{1}} D _{1}\longrightarrow 0
\end{equation}
be a short exact sequence in\ $\Mod R$. Then there exists a short exact sequence in\ $\Mod R\otimes S$
\begin{equation}\label{1-1}
  0\longrightarrow X\otimes Y \xrightarrow{d_{0}\oplus\partial_{0}} C_{0}\otimes D_{1}\oplus C_{1}\otimes D_{0} \xrightarrow{\partial_{0}-d_{0}} C_{0}\otimes D _{0}\xrightarrow{d_{1}\otimes\partial_{1}} C_{1}\otimes D _{1}\longrightarrow0
\end{equation}
where
\begin{align*}
  d_{0}\oplus\partial_{0}:~& X\otimes Y\longrightarrow C_{0}\otimes D_{1}\oplus C_{1}\otimes D_{0} \\
   & x\otimes y\longmapsto d_{0}(x)\otimes y\oplus x\otimes \partial_{0}(y)
\end{align*}
and
\begin{align*}
  \partial_{0}-d_{0}:~ & C_{0}\otimes D_{1}\oplus C_{1}\otimes D_{0}\longrightarrow  C_{0}\otimes D_{0}\\
   & x'\otimes y\oplus x\otimes y'\longmapsto x'\otimes \partial_{0}(y)- d_{0}(x')\otimes y
\end{align*}
are both homomorphisms of \ $R\otimes S$-modules; $\Ker d_{1}\otimes\partial_{1}=C_{0}\otimes D_{1}+ C_{1}\otimes D_{0}$.
\end{proposition}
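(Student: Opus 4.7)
The approach is to exploit the standard $3 \times 3$ grid of tensor products induced by the two given short exact sequences and extract the claimed sequence by a diagram chase, using only right-exactness of $(-) \otimes M$; no spectral sequence or $\Tor$ machinery is required.

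I would begin by checking that the stated formulas give well-defined $R \otimes S$-linear maps ($k$-balancedness is routine) and that the sequence is a complex. The first composition annihilates each generating tensor $x \otimes y$ because $d_0(x) \otimes \partial_0(y)$ appears with opposite signs in the two summands; the second composition vanishes on pure tensors because $d_1 d_0 = 0$ and $\partial_1 \partial_0 = 0$. Surjectivity of $d_1 \otimes \partial_1$ follows immediately from the surjectivity of $d_1$ and $\partial_1$, since pure tensors generate $C_1 \otimes D_1$.

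The core of the proof is the exactness at the interior terms, which I would handle in the grid whose rows tensor the $R$-sequence with each term of the $S$-sequence (and the columns tensor the $S$-sequence with each term of the $R$-sequence). By right-exactness every row and column is exact at its last two positions. For exactness at $C_0 \otimes D_0$, given $z$ with $(d_1 \otimes \partial_1)(z) = 0$, I would push $z$ by $1 \otimes \partial_1$ into the adjacent term, observe that the image is annihilated by $d_1 \otimes 1$ and hence comes from an element of the top row via the right-exact bottom row, lift further along the right-exact right column, subtract its image from $z$, and identify the remainder as the image of some element of $C_0 \otimes Y$ via the middle row. This is the standard corner chase for a right-exact double complex.

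The main obstacle is exactness at the middle direct sum and injectivity of the leading arrow, where the lack of left-exactness of tensor product would be fatal in a naive chase. Here I would carefully use that the balance equation $(1 \otimes \partial_0)(u) = (d_0 \otimes 1)(v)$ inside $C_0 \otimes D_0$, together with the right-exact top row $X \otimes Y \to C_0 \otimes Y \to C_1 \otimes Y \to 0$ and left column, still forces $(u, v)$ to lift to an element $w \in X \otimes Y$ with the required image under $d_0 \oplus \partial_0$. Finally, the identity $\Ker(d_1 \otimes \partial_1) = C_0 \otimes D_1 + C_1 \otimes D_0$ is to be read as the sum of the images of the natural maps into $C_0 \otimes D_0$, and is a compact restatement of the exactness at $C_0 \otimes D_0$ proved in the previous step.
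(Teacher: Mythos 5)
The paper offers no proof of this proposition at all---it is quoted from Zhou's paper with a citation---so your argument has to stand entirely on its own, and at the decisive step it does not. Your treatment of surjectivity onto $C_1\otimes D_1$ and of exactness at $C_0\otimes D_0$ via right-exactness is fine, and you correctly identify exactness at the middle direct sum and injectivity of the leading arrow as the hard part. But your proposed resolution---that the balance equation $(1\otimes\partial_0)(u)=(d_0\otimes 1)(v)$ together with the right-exact rows and columns ``still forces'' a lift to $X\otimes Y$---is precisely the assertion that fails if one genuinely uses only right-exactness, as your opening paragraph promises. Over a general base the statement is simply false: take $k=\mathbb{Z}$ and both sequences equal to $0\to\mathbb{Z}\xrightarrow{2}\mathbb{Z}\to\mathbb{Z}/2\to 0$. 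The complex becomes $0\to\mathbb{Z}\to\mathbb{Z}^{2}\to\mathbb{Z}\to\mathbb{Z}/2\to 0$ with $1\mapsto(2,2)$ and $(a,b)\mapsto 2a-2b$; the kernel at the middle term is the diagonal $\{(a,a)\}$ while the image from $X\otimes Y$ is only $\{(2a,2a)\}$, the defect being $\mbox{Tor}_1^{\mathbb{Z}}(\mathbb{Z}/2,\mathbb{Z}/2)$. Injectivity of the leading arrow fails similarly for $0\to\mathbb{Z}/2\to\mathbb{Z}/4\to\mathbb{Z}/2\to 0$ taken twice, where the first map is zero on the nonzero module $\mathbb{Z}/2\otimes\mathbb{Z}/2$. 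No chase built only on right-exactness can close this gap, because right-exactness holds in these counterexamples.

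The missing ingredient is the standing hypothesis that $k$ is a field: every $k$-module is then free, $-\otimes_k M$ is exact, and both short exact sequences split $k$-linearly. With that in hand the proof is short---either invoke the K\"unneth formula for the total complex of $(X\to C_0)\otimes_k(Y\to D_0)$, whose homology is $\bigoplus_{p+q=n}H^p\otimes_k H^q$ with no Tor correction, or observe that every row and column of your $3\times 3$ grid is exact on the left as well, after which your chase does go through verbatim. You need to say this explicitly; ``no Tor machinery is required'' is the one claim in your write-up that is wrong in substance. A smaller point: the printed middle term $C_0\otimes D_1\oplus C_1\otimes D_0$ is a misprint for $C_0\otimes Y\oplus X\otimes D_0$ (this is what the displayed formulas for the two maps actually require, and it is what makes $\Ker(d_1\otimes\partial_1)$ equal to the sum of the images of $X\otimes D_0$ and $C_0\otimes Y$ inside $C_0\otimes D_0$; the version as printed is refuted already over $k$ itself by taking $C_1=D_1=0$). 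You have silently worked with the corrected version, which is the right thing to do, but the discrepancy deserves an explicit remark.
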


\begin{proposition}{ {\rm\cite[\text{Chapter 5, Section 5, Corollaries 1 and 2}]{Zhou:4}}}\label{p1}
Let  \ $R$ and \ $S$ be\ $k$-algebras. Suppose that\ $M$ is\ $R$-module, and\ $N$ is\ $S$-module.
\begin{itemize}
      \item[\upshape{(1)}] If\ $M$ is a projective\ $R$-module  and \ $N$ is a projective\ $S$-module, then\ $M\otimes N$ is a projective\ $R\otimes S$-module;
      %\item[\upshape{(2)}]若\ $M$ 是内射\ $R$-模,\ $N$是内射\ $S$- 模,则\ $M\otimes N$ 是内射\ $R\otimes S$-模;
      \item[\upshape{(2)}]If\ $M$ is a flat\ $R$-module  and \ $N$ is a flat\ $S$-module, then\ $M\otimes N$ is a flat\ $R\otimes S$-module.
    \end{itemize}
\end{proposition}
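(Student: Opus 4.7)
The plan is to prove the two parts in sequence, using the same bimodule bookkeeping in both, with (1) serving as a warm-up.

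For part~(1), I would reduce to the free case in three steps. First, $R\otimes_k S$ is free of rank one over itself, hence projective. Next, using the compatibility of tensor product with direct sums in each variable together with the associativity of base-change, one obtains a natural isomorphism $R^{(I)}\otimes_k S^{(J)} \cong (R\otimes_k S)^{(I\times J)}$ of left $R\otimes_k S$-modules. Finally, write $M$ (resp.\ $N$) as a direct summand of a free module $R^{(I)}$ (resp.\ $S^{(J)}$). The distributivity of $\otimes_k$ over direct sums exhibits $M\otimes_k N$ as a direct summand of $(R\otimes_k S)^{(I\times J)}$, hence as a projective $R\otimes_k S$-module.

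For part~(2), I would use the natural isomorphism
$$X\otimes_{R\otimes_k S}(M\otimes_k N)\ \cong\ (X\otimes_R M)\otimes_S N$$
valid for every right $R\otimes_k S$-module $X$, where the two commuting side-actions on $X$ induce the $R$- and $S$-actions required to form the iterated tensor on the right. Granted this, applying it to any short exact sequence $0\to X'\to X\to X''\to 0$ of right $R\otimes_k S$-modules and invoking flatness of $M$ over $R$ and of $N$ over $S$ shows that $-\otimes_{R\otimes_k S}(M\otimes_k N)$ is exact, which is precisely the definition of flatness.

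The main obstacle will be establishing the displayed isomorphism with the correct module structures. It is a bimodule bookkeeping argument: one must check that the evident bilinear maps are compatible with the multiplication $(r\otimes s)(m\otimes n)=rm\otimes sn$ on $R\otimes_k S$ and with the commuting side-actions on $X$. I would carry this out by constructing the candidate maps explicitly on simple tensors, verifying well-definedness via the universal property of each one-sided tensor product in turn, and then producing an inverse by the same procedure. Once this is in place, both conclusions follow essentially formally.
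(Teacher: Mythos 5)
Your argument is correct. Note, however, that the paper does not prove this proposition at all: it is quoted verbatim from Zhou's textbook (Chapter 5, Section 5, Corollaries 1 and 2), so there is no in-paper proof to compare against. Your write-up is the standard argument: part (1) by exhibiting $M\otimes_k N$ as a direct summand of the free module $(R\otimes_k S)^{(I\times J)}$, and part (2) via the associativity isomorphism $X\otimes_{R\otimes_k S}(M\otimes_k N)\cong (X\otimes_R M)\otimes_S N$, whose well-definedness check on the relation $x(r\otimes s)\otimes(m\otimes n)=x\otimes(rm\otimes sn)$ goes through exactly as you indicate because the two side-actions on $X$ commute (and restriction of scalars along $r\mapsto r\otimes 1$ is unproblematic since $k$ is a field). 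Both parts are complete once that isomorphism is established.
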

\begin{proposition}{\upshape{\cite{Cartan:2}}}\label{p2}
 Let\ $\Lambda,~\Gamma,~\Sigma$ be\ $k$-projective\ $k$-algebras. In the situation

 $(A_{\Lambda-\Gamma},_{\Lambda}B_{\Sigma} ,_{\Gamma-\Sigma}C)$,
assume
{\rm$$\Tor_n^\Lambda(A,B)=0=\Tor_n^\Sigma(B,C)$$} for\ $n>0$.
Then there is an isomorphism
{\rm$$\Tor_{n}^{\Gamma\otimes\Sigma}(A\otimes_\Lambda B,C)\cong \Tor_{n}^{\Lambda\otimes\Gamma}(A,B\otimes_\Sigma C)$$}
for\ $n>0$.
\end{proposition}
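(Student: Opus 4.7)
The plan is to recognize this as a classical Cartan--Eilenberg type identity and prove it by comparing the two edge spectral sequences of a single double complex. The two sides are meant to be two different ways of computing the hyper-derived functors of $A\otimes_\Lambda B\otimes_\Sigma C$, and the vanishing hypotheses are precisely what is needed to collapse each spectral sequence onto its edge.

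First I would choose a projective resolution $P_\bullet\to A$ over $\Lambda\otimes\Gamma$ and a projective resolution $Q_\bullet\to C$ over $\Gamma\otimes\Sigma$. The assumption that the three algebras are $k$-projective is used here: standard change-of-rings tells us that each $P_p$ remains projective when restricted along $\Lambda\hookrightarrow\Lambda\otimes\Gamma$, and each $Q_q$ remains projective when restricted along $\Sigma\hookrightarrow\Gamma\otimes\Sigma$. Next I would form the first-quadrant double complex
\[
X_{p,q}\;=\;P_p\otimes_{\Lambda}B\otimes_{\Sigma}Q_q,
\]
which is naturally a complex over $\Gamma\otimes\Sigma$ on one side and over $\Lambda\otimes\Gamma$ on the other, so that both target $\Tor$ groups appear as homology of suitable edge complexes.

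The key step is to run the two standard spectral sequences of the total complex $\mathrm{Tot}(X_{\bullet,\bullet})$. Filtering by rows, the vertical homology is $P_p\otimes_\Lambda\Tor^\Sigma_q(B,C)$, which vanishes for $q>0$ by hypothesis; the sequence therefore collapses to the column $q=0$, and its abutment is $H_n(P_\bullet\otimes_\Lambda B\otimes_\Sigma C)=\Tor^{\Lambda\otimes\Gamma}_{n}(A,B\otimes_\Sigma C)$. Filtering by columns, the horizontal homology is $\Tor^\Lambda_p(A,B)\otimes_\Sigma Q_q$, which vanishes for $p>0$ by the other hypothesis; the sequence collapses to the row $p=0$, and its abutment is $H_n(A\otimes_\Lambda B\otimes_\Sigma Q_\bullet)=\Tor^{\Gamma\otimes\Sigma}_n(A\otimes_\Lambda B,C)$. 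Since both spectral sequences converge to $H_n(\mathrm{Tot}(X_{\bullet,\bullet}))$, the two $\Tor$ groups are canonically isomorphic in every degree $n>0$.

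The principal obstacle is making the change-of-rings justification rigorous: one must verify that $P_p$ really is projective (or at least flat) as a left $\Lambda$-module and that $Q_q$ really is projective as a right $\Sigma$-module, so that the horizontal/vertical homology of the double complex actually computes the intended $\Tor$ groups. This is exactly where the hypothesis that $\Lambda,\Gamma,\Sigma$ are $k$-projective is indispensable, because it guarantees that $\Lambda\otimes\Gamma$ is a projective (hence flat) right $\Lambda$-module and analogously for the other factor, so that restriction along the canonical inclusions preserves projectivity of free, and hence of projective, summands. Once this is in place, the spectral sequence comparison is immediate and yields the stated isomorphism.
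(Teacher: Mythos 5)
The paper offers no proof of this proposition---it is quoted from Cartan--Eilenberg---and your double-complex/two-spectral-sequences argument is essentially the classical proof of the cited result, so your overall strategy is the right one. There is, however, one point you must repair before the argument is literally correct: the double complex cannot be $P_p\otimes_\Lambda B\otimes_\Sigma Q_q$ with only the $\Lambda$- and $\Sigma$-identifications imposed. With that literal complex the two edge complexes are $(A\otimes_\Lambda B)\otimes_\Sigma Q_\bullet$ and $P_\bullet\otimes_\Lambda(B\otimes_\Sigma C)$, whose homologies are $\mbox{Tor}^\Sigma_n(A\otimes_\Lambda B,C)$ and $\mbox{Tor}^\Lambda_n(A,B\otimes_\Sigma C)$ --- a true but different ($\Gamma$-free) statement, not the asserted isomorphism of $\mbox{Tor}$ over $\Gamma\otimes\Sigma$ and $\Lambda\otimes\Gamma$. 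You must also impose the $\Gamma$-identification between $P_p$ and $Q_q$, i.e.\ take
$$X_{p,q}=(P_p\otimes_\Lambda B)\otimes_{\Gamma\otimes\Sigma}Q_q\;\cong\;P_p\otimes_{\Lambda\otimes\Gamma}(B\otimes_\Sigma Q_q),$$
the two descriptions being canonically isomorphic as quotients of $P_p\otimes_k B\otimes_k Q_q$ by all three relations. With this corrected $X_{p,q}$ everything you outline goes through: $k$-projectivity of $\Gamma$ makes $\Lambda\otimes\Gamma$ projective as a right $\Lambda$-module and $\Gamma\otimes\Sigma$ projective as a left $\Sigma$-module, so $P_\bullet$ restricts to a projective resolution of $A$ over $\Lambda$ and $Q_\bullet$ to one of $C$ over $\Sigma$; the first filtration has $E^1$-term $\mbox{Tor}^\Lambda_p(A,B)\otimes_{\Gamma\otimes\Sigma}Q_q$ (note the tensor is over $\Gamma\otimes\Sigma$, not $\Sigma$) and collapses onto $\mbox{Tor}^{\Gamma\otimes\Sigma}_n(A\otimes_\Lambda B,C)$, while the second has $E^1$-term $P_p\otimes_{\Lambda\otimes\Gamma}\mbox{Tor}^\Sigma_q(B,C)$ (over $\Lambda\otimes\Gamma$, not $\Lambda$) and collapses onto $\mbox{Tor}^{\Lambda\otimes\Gamma}_n(A,B\otimes_\Sigma C)$, giving the stated isomorphism as the common abutment.
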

\begin{proposition}{\upshape{\cite{Chen:1,Pasquali}}}\label{p3}
 If $M, N\in\Lambda,\ X ,Y\in\Gamma $, then there is a function isomorphism
{\rm$$\Ext_{\Lambda\otimes \Gamma}^n(M\otimes_k N,X\otimes_kY)\cong\bigoplus_{p+q=n}\Ext_\Lambda^p(M,X)\otimes\Ext_\Gamma^q(N,Y).$$}
\end{proposition}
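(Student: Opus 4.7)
The plan is to combine projective resolutions with the Künneth formula over the field $k$. First I would pick projective resolutions $P_\bullet \to M$ in $\Mod \Lambda$ and $Q_\bullet \to N$ in $\Mod \Gamma$. By Proposition \ref{p1}(1), each $P_i \otimes_k Q_j$ is a projective $\Lambda \otimes \Gamma$-module, and since $k$ is a field, $-\otimes_k-$ is exact. A standard double-complex argument (iterated application of the two column/row spectral sequences, or equivalently the Künneth theorem for chain complexes of $k$-vector spaces) then shows that the total complex $\mathrm{Tot}(P_\bullet \otimes_k Q_\bullet)$ is a projective resolution of $M \otimes_k N$ over $\Lambda \otimes \Gamma$.

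With that resolution in hand, I compute $\Ext^n_{\Lambda\otimes\Gamma}(M\otimes_k N, X\otimes_k Y) = H^n\Hom_{\Lambda\otimes\Gamma}(\mathrm{Tot}(P_\bullet \otimes_k Q_\bullet), X\otimes_k Y)$. The heart of the argument is the natural cochain map
$$\varphi \colon \Hom_\Lambda(P_\bullet, X) \otimes_k \Hom_\Gamma(Q_\bullet, Y) \longrightarrow \Hom_{\Lambda\otimes\Gamma}\bigl(P_\bullet \otimes_k Q_\bullet,\, X\otimes_k Y\bigr),$$
sending $f \otimes g$ to $(p \otimes q \mapsto f(p) \otimes g(q))$. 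On the generators $\Lambda$ and $\Gamma$ both sides reduce to $X\otimes_k Y$, and by additivity $\varphi$ is an isomorphism whenever the $P_i$ and $Q_j$ can be taken finitely generated projective. Then applying the Künneth formula to $C^\bullet := \Hom_\Lambda(P_\bullet, X)$ and $D^\bullet := \Hom_\Gamma(Q_\bullet, Y)$ — where every $\Tor$-correction vanishes because $k$ is a field — gives
$$H^n(C^\bullet \otimes_k D^\bullet) \cong \bigoplus_{p+q=n} H^p(C^\bullet) \otimes_k H^q(D^\bullet) = \bigoplus_{p+q=n} \Ext^p_\Lambda(M,X) \otimes_k \Ext^q_\Gamma(N,Y),$$
which, combined with $\varphi$, is exactly the claimed isomorphism.

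The main obstacle is the scope of validity of $\varphi$. For arbitrary (possibly infinitely generated) projectives, $\Hom$ converts direct sums into direct products, and products do not commute with $\otimes_k$ in general, so $\varphi$ can fail to be surjective. Hence a finiteness hypothesis is genuinely needed — typically $\Lambda, \Gamma$ finite-dimensional and $M, N$ finitely generated, which is the setting of \cite{Chen:1,Pasquali} — so that the resolutions may be chosen degree-wise finitely generated. Once this is available, everything else is automatic: the projectivity of $P_i \otimes_k Q_j$ is given by Proposition \ref{p1}(1), exactness of $\otimes_k$ hands us the resolution for free, and the Künneth step is immediate because $k$ is a field.
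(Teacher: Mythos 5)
This proposition is imported by the paper from \cite{Chen:1,Pasquali} without proof, so there is no in-paper argument to compare against; your proposal reconstructs the standard proof that those references give, and it is correct. The two steps --- that $\mathrm{Tot}(P_\bullet\otimes_k Q_\bullet)$ is a projective resolution of $M\otimes_k N$ over $\Lambda\otimes\Gamma$ (using Proposition \ref{p1}(1) and exactness of $\otimes_k$ over a field), and that the K\"unneth formula over $k$ has no $\Tor$ corrections --- are exactly right. Your identification of where the argument genuinely needs a hypothesis is the most valuable part: the comparison map $\varphi$ is an isomorphism only when the resolutions can be chosen degreewise finitely generated, since $\Hom_\Lambda(\bigoplus\Lambda,X)\otimes_k\Hom_\Gamma(\bigoplus\Gamma,Y)$ is a tensor product of direct products, which does not agree with the corresponding product of tensor products in general. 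This caveat is not merely pedantic here: the paper later invokes Proposition \ref{p3} with first argument $F_0\otimes S$ for an \emph{arbitrary} flat $R$-module $F_0$ (in the necessity part of Theorem \ref{t1}), and arbitrary flat modules need not admit degreewise finitely generated projective resolutions, so the scope restriction you flag is a live issue for the paper's own applications, not just for your write-up.
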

\begin{definition}{\upshape{\cite{Enochs:2}}}\label{d1}
A module M is said to be cotorsion if\ $\Ext_R^1(F,M)=0$ for all flat module\ $F$. If~$M$ is both a flat module and a cotorsion module, then~$M$ is called a flat cotorsion module.
\end{definition}
\begin{definition}{\upshape{\cite{Mao:2}}}\label{d2}
Let\ $R$ be a~$k$-algebra. For any right\ $R$-module\ $M$, the cotorsion dimension $\cd(M)$ of $M$ is defined to be the smallest integer\ $n\geq0$ such that\ $\Ext_R^{n+1}(F,M)=0$ for any flat right\ $R$-module\ $F$. If there is no such\ $n$, set\ $\cd(M)=\infty$.

The left global cotorsion dimension of the algebra~$R$ is defined as the supremum of the cotorsion dimensions of~$R$-modules, denoted as~$l.cot. D(R)$.
\end{definition}

\begin{proposition}{\upshape{\cite{Mao:2}}}\label{p5}
For any right~$R$-module~$M$ and integer ~$n\geq0$, the following are equivalent:
\begin{itemize}
  \item[\upshape{(1)}]$\cd(M)\leq n$;
  \item[\upshape{(2)}] {\rm$\Ext_R^{n+1}(F,M)=0$}, for any flat~$R$-module~$F$;
  \item[\upshape{(3)}] {\rm$\Ext_R^{n+j}(F,M)=0$}, for any flat~$R$-module~$F$ and~$j\geq 1$;
  \item[\upshape{(4)}] If the sequence~$0\rightarrow M\rightarrow C^0\rightarrow C^1\rightarrow\cdots\rightarrow C^{n-1}\rightarrow C^n\rightarrow0$ is exact with ~$C^0,~C^1,~\cdots,~ C^{n-1}$~cotorsion, then~$C^n$  is also a cotorsion module.
  %\item[\upshape{(5)}] $cd(F(M)\leq n$
\end{itemize}
\end{proposition}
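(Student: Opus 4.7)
The plan is to establish a chain of implications $(1)\Leftrightarrow(2)\Rightarrow(3)\Rightarrow(2)\Rightarrow(4)\Rightarrow(1)$, anchored by one auxiliary fact that tightens the notion of ``cotorsion'' to give higher-$\Ext$ vanishing as well.

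The key preliminary I would record first is: for any flat $R$-module $F$ and any cotorsion $R$-module $C$, $\Ext_R^i(F,C)=0$ for all $i\geq 1$. This uses the classical closure property that, in a short exact sequence $0\to A\to B\to C\to 0$ of $R$-modules with $B$ and $C$ flat, $A$ is flat as well. Applied to a projective resolution of $F$, every syzygy $\Omega^i F$ is flat, and dimension shifting along it reduces $\Ext_R^i(F,C)$ to $\Ext_R^1(\Omega^{i-1}F,C)$, which vanishes by the definition of cotorsion. With this in hand, $(1)\Leftrightarrow(2)$ is just Definition~\ref{d2}.

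For $(2)\Rightarrow(3)$, I would do induction on $j$: pick $0\to F'\to P\to F\to 0$ with $P$ projective and $F'$ flat (again by closure of flats under kernels of epis between flats), then the long exact sequence yields $\Ext_R^{n+j+1}(F,M)\cong \Ext_R^{n+j}(F',M)$, and apply the inductive hypothesis to $F'$. The converse $(3)\Rightarrow(2)$ is immediate. For $(2)\Rightarrow(4)$, break the given resolution into short exact sequences $0\to K^i\to C^i\to K^{i+1}\to 0$ with $K^0=M$ and $K^n=C^n$. For any flat $F$, the auxiliary fact forces $\Ext_R^i(F,C^j)=0$ for all $i\geq 1$ and $j<n$, so repeated dimension shifting gives $\Ext_R^1(F,C^n)\cong \Ext_R^{n+1}(F,M)=0$, i.e.\ $C^n$ is cotorsion.

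Finally $(4)\Rightarrow(1)$: by the Bican--El~Bashir--Enochs result the flat cotorsion pair is complete, so $M$ admits a (special) cotorsion preenvelope, and iterating yields an exact coresolution $0\to M\to C^0\to C^1\to\cdots$ with each $C^i$ cotorsion. Truncating at stage $n$ and setting $C^n$ equal to the cokernel produces a sequence to which $(4)$ applies, so $C^n$ is cotorsion. The same dimension-shift calculation as before gives $\Ext_R^{n+1}(F,M)\cong \Ext_R^1(F,C^n)=0$ for all flat $F$, whence $\cd(M)\leq n$. The main obstacle is simply keeping bookkeeping straight while dimension-shifting through the resolution in $(2)\Rightarrow(4)$; the auxiliary higher-$\Ext$ vanishing is what makes this bookkeeping work, and the completeness of the flat cotorsion pair is what makes the construction in $(4)\Rightarrow(1)$ available at all.
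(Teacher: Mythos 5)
The paper does not prove this proposition; it is quoted verbatim from Mao--Ding \cite{Mao:2}, so there is no in-paper argument to compare against. Judged on its own, your proof is correct and is essentially the standard one. The auxiliary fact (flat $F$, cotorsion $C$ $\Rightarrow$ $\Ext_R^i(F,C)=0$ for all $i\geq 1$, via flatness of syzygies and dimension shifting) is exactly the right engine, and the implications $(2)\Rightarrow(3)\Rightarrow(2)$, $(2)\Rightarrow(4)$, and $(4)\Rightarrow(1)$ all go through as you describe. Two small remarks. First, $(1)\Rightarrow(2)$ is not literally ``just Definition~\ref{d2}'': $\cd(M)\leq n$ only gives $\Ext_R^{m+1}(F,M)=0$ for some $m\leq n$, and you must climb from degree $m+1$ to degree $n+1$ using the same syzygy shift you use for $(2)\Rightarrow(3)$ --- the ingredient is already in your write-up, but the sentence as stated elides it. Second, in $(4)\Rightarrow(1)$ the appeal to completeness of the flat cotorsion pair is heavier than necessary: injective modules are cotorsion, so an injective coresolution of $M$, truncated at stage $n$, already furnishes a sequence to which $(4)$ applies. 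Neither point is a gap; the argument is sound as written.
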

\begin{remark}{\upshape{\cite{Mao:2}}}\label{remark2-1}
 Let $M$ be any right~$R$-module. The the following are equivalent:
\begin{itemize}
  \item[(1)] $\cd(M)=\inf\{n~|~\textrm{there exists an exact sequence}~0\rightarrow M\rightarrow C^{0}\rightarrow C^{1}\rightarrow \cdots\rightarrow C^{n}\rightarrow 0,~\textrm{where}~C^{i}~ \textrm{are a cotorsion modules}\}$
  \item[(2)] The integer $n$ such that $M$ admits a minimal cotorsion resolution, i.e. an exact sequence
  $$0\rightarrow M\rightarrow C^{0}\xrightarrow{d^{0}} C^{1}\xrightarrow{d^{1}} \cdots\rightarrow C^{n}\xrightarrow{d^{n}}\cdots$$
  where each\ $C^i$ is cotorsion,\ $L^i=Coker(C^{i-2}\rightarrow C^{i-1})\rightarrow C^i$ is a cotorsion envelope of\ $L^i, C^i\neq0,\ i=0,1,...,n,\ C^{-2}=0.\ C^-1=M$.
\end{itemize}

\end{remark}
\begin{proposition}{\upshape{\cite{Mao:2}}}\label{p6}
Let~$R$ be a\ $k$-algebra, and~$0\rightarrow X\rightarrow Y\rightarrow Z\rightarrow0$~be an exact sequence of\ $R$-modules. If two of~$\cd(X),~\cd(Y),~\cd(Z)$ are finite, so is the third. Moreover
\begin{itemize}
  \item[\upshape{(1)}] $\cd(Y)\leq \sup\{\cd(X),~\cd(Z)\}$;
  \item[\upshape{(2)}] $\cd(X)\leq \sup\{\cd(Y),~\cd(Z)+1\}$;
  \item[\upshape{(3)}] $\cd(Z) \leq \sup\{\cd(Y),~\cd(X)-1\}$.
\end{itemize}

\end{proposition}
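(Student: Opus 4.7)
The plan is to deduce all three inequalities simultaneously from a single source: the long exact sequence obtained by applying the contravariant functor $\Ext_R^*(F,-)$ to the given short exact sequence, for an arbitrary flat $R$-module $F$. Using Proposition~\ref{p5}, I would translate each assertion ``$\cd(-)\le t$'' into the purely homological condition ``$\Ext_R^{t+j}(F,-)=0$ for every flat $F$ and every $j\ge 1$'', so the whole argument reduces to tracking indices in a long exact sequence.

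Concretely, after fixing a flat $F$, the long exact sequence
\[
\cdots\to\Ext_R^{i}(F,X)\to\Ext_R^{i}(F,Y)\to\Ext_R^{i}(F,Z)\to\Ext_R^{i+1}(F,X)\to\cdots
\]
makes each desired inequality a one-line consequence of the ``two-out-of-three'' principle for vanishing. For (1), if $\cd(X),\cd(Z)\le s:=\sup\{\cd(X),\cd(Z)\}$, then the two outer terms $\Ext_R^{s+1}(F,X)$ and $\Ext_R^{s+1}(F,Z)$ vanish, forcing $\Ext_R^{s+1}(F,Y)=0$ and hence $\cd(Y)\le s$. For (2), set $s:=\sup\{\cd(Y),\cd(Z)+1\}$; since $s\ge\cd(Y)$ one has $\Ext_R^{s+1}(F,Y)=0$, and since $s-1\ge\cd(Z)$ one has $\Ext_R^{s}(F,Z)=0$, so the segment $\Ext_R^{s}(F,Z)\to\Ext_R^{s+1}(F,X)\to\Ext_R^{s+1}(F,Y)$ of the long exact sequence yields $\Ext_R^{s+1}(F,X)=0$, giving $\cd(X)\le s$. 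For (3), set $s:=\sup\{\cd(Y),\cd(X)-1\}$; then $\Ext_R^{s+1}(F,Y)=0$ and $\Ext_R^{s+2}(F,X)=0$, and the segment $\Ext_R^{s+1}(F,Y)\to\Ext_R^{s+1}(F,Z)\to\Ext_R^{s+2}(F,X)$ gives $\Ext_R^{s+1}(F,Z)=0$, hence $\cd(Z)\le s$.

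The finiteness clause is then automatic: if two of $\cd(X), \cd(Y), \cd(Z)$ are finite, the appropriate inequality among (1)--(3) bounds the third by a finite supremum. There is essentially no obstacle beyond bookkeeping; the only care needed is the index shift $\pm 1$ in (2) and (3), which comes from the connecting homomorphism and is easy to get wrong on a first pass. I would therefore present the proof as a single paragraph invoking Proposition~\ref{p5}, followed by three short bullet-style derivations corresponding to the three index choices above.
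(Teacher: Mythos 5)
Your argument is correct and is the standard one: the paper gives no proof of Proposition~\ref{p6} (it is quoted from \cite{Mao:2}), but your long-exact-sequence bookkeeping, reduced to vanishing statements via Proposition~\ref{p5}, is exactly the technique the paper itself uses for the neighbouring Proposition~\ref{prop2-7}, and all three index shifts (including the $\pm1$ in (2) and (3)) check out, as does the finiteness clause. One cosmetic slip: $\Ext_R^{*}(F,-)$ is covariant in its second variable, not contravariant; the long exact sequence you display is nevertheless the correct one.
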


\begin{proposition}\label{prop2-7}
Let~$R$ be a\ $k$-algebra. If
\begin{equation}\label{eq1-2}
0\rightarrow X\rightarrow Y\rightarrow Z\rightarrow0
\end{equation}
is a short exact sequence of\ $R$-modules, then
\begin{itemize}
  \item[\upshape{(1)}] If\ $\cd(Y)>\cd(Z)$, then\ $\cd(X)=\cd(Y)$;
  \item[\upshape{(2)}] If\ $\cd(Y)<\cd(Z)$, then\ $\cd(X)=\cd(Z)+1$;
  \item[\upshape{(3)}] If\ $\cd(Y)=\cd(Z)$, then\ $\cd(X)\leq\cd(Z)+1$.
\end{itemize}

\end{proposition}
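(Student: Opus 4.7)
\medskip
\noindent\textbf{Proof plan.} The strategy is to deduce all three statements directly from the three inequalities supplied by Proposition~\ref{p6} applied to the short exact sequence (\ref{eq1-2}):
\begin{equation*}
\cd(Y)\leq\sup\{\cd(X),\cd(Z)\},\qquad \cd(X)\leq\sup\{\cd(Y),\cd(Z)+1\},\qquad \cd(Z)\leq\sup\{\cd(Y),\cd(X)-1\}.
\end{equation*}
In each of the three cases I will simplify the suprema using the hypothesized relation between $\cd(Y)$ and $\cd(Z)$, and then sandwich $\cd(X)$ between matching upper and lower bounds.

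\smallskip
For (1), assuming $\cd(Y)>\cd(Z)$ we have $\cd(Y)\geq\cd(Z)+1$, so the second inequality collapses to $\cd(X)\leq\cd(Y)$. For the reverse direction, the first inequality says $\cd(Y)\leq\sup\{\cd(X),\cd(Z)\}$; since $\cd(Y)>\cd(Z)$, the supremum cannot be achieved by $\cd(Z)$, so $\cd(Y)\leq\cd(X)$. Combining gives equality.

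\smallskip
For (2), assuming $\cd(Y)<\cd(Z)$, the second inequality gives $\cd(X)\leq\sup\{\cd(Y),\cd(Z)+1\}=\cd(Z)+1$. Conversely, the third inequality reads $\cd(Z)\leq\sup\{\cd(Y),\cd(X)-1\}$; since $\cd(Z)>\cd(Y)$, the supremum must be realized by $\cd(X)-1$, so $\cd(Z)\leq\cd(X)-1$, i.e.\ $\cd(X)\geq\cd(Z)+1$. Part (3) is the shortest: when $\cd(Y)=\cd(Z)$, the second inequality of Proposition~\ref{p6} immediately yields $\cd(X)\leq\sup\{\cd(Z),\cd(Z)+1\}=\cd(Z)+1$.

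\smallskip
The only real subtlety is handling the case where some of the dimensions equal $\infty$, which could make the strict inequalities in the hypotheses ambiguous. Proposition~\ref{p6} already guarantees that finiteness of any two of $\cd(X),\cd(Y),\cd(Z)$ forces finiteness of the third, so before invoking the three inequalities I will briefly check that the assumptions $\cd(Y)>\cd(Z)$ and $\cd(Y)<\cd(Z)$ are interpreted with $\cd(Z)$ (resp.\ $\cd(Y)$) finite, which ensures the suprema behave as expected and that no infinite absorption breaks the argument. Beyond that bookkeeping, the proof is a direct manipulation of suprema.
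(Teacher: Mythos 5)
Your proof is correct, but it takes a genuinely different route from the paper's. You derive all three statements formally from the three inequalities of Proposition~\ref{p6}, simplifying the suprema under each hypothesis and sandwiching $\cd(X)$ between matching upper and lower bounds; the logic in each case checks out (in (1), the lower bound $\cd(Y)\leq\cd(X)$ follows because $\cd(X)\leq\cd(Z)$ would force $\cd(Y)\leq\cd(Z)$, and similarly in (2) the third inequality forces $\cd(Z)\leq\cd(X)-1$). The paper instead works directly with the long exact sequence of $\Ext$ groups obtained by applying $\Hom_R(F,-)$ to the short exact sequence, reading off isomorphisms such as $\Ext^{n+j}(F,X)\cong\Ext^{n+j}(F,Y)$ (resp.\ $\Ext^{n+j}(F,Z)\cong\Ext^{n+j+1}(F,X)$) beyond the vanishing range. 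Your approach is cleaner in that it reuses an already-cited result and avoids the bookkeeping with specific flat modules $F$ witnessing non-vanishing (which the paper's write-up handles somewhat loosely, e.g.\ asserting $\Ext^n(F,Y)\neq 0$ for the same $F$ throughout); the paper's approach is more self-contained and makes the underlying $\Ext$ mechanism explicit. One small caveat: the paper states the inequalities of Proposition~\ref{p6} after the clause about finiteness of two of the three dimensions, so if you rely on them in the infinite case you should say explicitly (as you begin to) that they remain valid with the usual conventions $\infty+1=\infty$ and $\infty-1=\infty$, which is how they are stated in Mao--Ding's original paper; with that understood, your argument covers all cases.
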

\begin{proof}
For any flat~$R$-module~$F$, applying the functor\ $\Hom_{R}(F, -)$ to the short exact sequence\ (\ref{eq1-2}), we obtain a long exact sequence
\begin{equation}\label{eq1-3}
\begin{split}
 \xymatrix@R=2em{\cdots\ar[r]&\Ext^{n}(F,X)\ar[r]&\Ext^{n}(F,Y)\ar[r]&\Ext^{n}(F,Z)\ar@{-}[r]+<1.3mm,0mm>&
\ar  `r[d] `[l]
`[lllld] `[dlll] [dlll]\\&
\Ext^{n+1}(F,X) \ar[r]&\Ext^{n+1}(F,Y)\ar[r]&\Ext^{n+1}(F,Z)\ar[r]&\cdots~;}
\end{split}
\end{equation}

(1) Suppose~$\cd(Z)=n$. When~$m\geq n$, $\Ext^{m}(F,Z)=0$, but~$\Ext^{n}(F,Y)\neq0$, then~$\Ext^{n}(F,X)\neq0$. Thus, from the exact sequence~(\ref{eq1-3}), when\ $j>0$, we have
$$\Ext^{n+j}(F,X)\cong \Ext^{n+j}(F,X).$$
Therefore, $\cd(X)=\cd(Y)$.

(2) Suppose~$\cd(Y)=n$. When~$m\geq n$, $\Ext^{m}(F,Y)=0$, but~$\Ext^{n}(F,Z)\neq0$, then~$\Ext^{n+1}(F,X)\neq0$. Thus, from the exact sequence~(\ref{eq1-3}), when\ $j>0$, we have
$$\Ext^{n+j}(F,Z)\cong \Ext^{n+j+1}(F,X).$$
Therefore, $\cd(X)=\cd(Z)+1$.

(3) Suppose~$\cd(Y)=\cd(Z)=n$. When~$m\geq n$, $\Ext^{m}(F,Y)=\Ext^{m}(F,Z)=0$, then~$\Ext^{n+1}(F,X)=0$.

According to Proposition~\ref{p5}, we can conclude that $\cd(X)\leq\cd(Z)+1$.
\end{proof}
\section{TENSOR PRODUCTS OF COTORSION MODULES}

\begin{lemma}\label{l1}
Let $F$ be an $R$-module and $H$ be an $S$-module. Then $H \otimes F$ is a flat $S \otimes R$-module if and only if $F$ is a flat $R$-module and $H$ is a flat $S$-module.
\end{lemma}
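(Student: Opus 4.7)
The ``if'' direction is an immediate application of Proposition~\ref{p1}(2), with the roles of the two algebras interchanged. The substance lies in the converse: assuming $H\otimes F$ is flat over $S\otimes R$, I want to deduce that $F$ is flat over $R$, with the flatness of $H$ over $S$ following by the symmetric argument after swapping the pairs $(R,F)$ and $(S,H)$.

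The plan is to verify flatness of $F$ directly against an arbitrary short exact sequence $0\to A\to B\to C\to 0$ of right $R$-modules. Since $k$ is a field, the functor $S\otimes_k-$ is exact, so
$$0\to S\otimes_k A\to S\otimes_k B\to S\otimes_k C\to 0$$
is a short exact sequence of right $S\otimes R$-modules, which remains exact after tensoring over $S\otimes R$ with the assumed flat module $H\otimes F$. The key computational step is the natural identification
$$(S\otimes_k X)\otimes_{S\otimes R}(H\otimes_k F)\;\cong\;H\otimes_k(X\otimes_R F)$$
for any right $R$-module $X$, given on elements by $(s\otimes x)\otimes(h\otimes f)\mapsto sh\otimes(x\otimes_R f)$, with obvious inverse $h\otimes(x\otimes_R f)\mapsto(1_S\otimes x)\otimes(h\otimes f)$. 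I expect the main obstacle to be the bookkeeping for this isomorphism, since one must keep careful track of the several commuting $R$- and $S$-module structures on the two sides; once it is in place, the rest of the argument is essentially formal.

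Applying this identification termwise converts the tensored sequence into
$$0\to H\otimes_k(A\otimes_R F)\to H\otimes_k(B\otimes_R F)\to H\otimes_k(C\otimes_R F)\to 0,$$
which is exact. Because every $k$-module is a $k$-vector space, the functor $H\otimes_k-$ is faithful as soon as $H\neq 0$, so the injectivity of the leftmost map forces $A\otimes_R F\to B\otimes_R F$ to be injective. Combined with the right-exactness of $-\otimes_R F$, this shows $F$ is flat over $R$; the trivial case $H=0$ makes $H\otimes F=0$ and can be handled separately. Swapping $(R,F)$ with $(S,H)$ then yields flatness of $H$ over $S$.
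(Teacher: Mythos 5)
Your proof is correct, but it takes a genuinely different route from the paper's. The paper argues by contradiction using derived functors: if $H$ were not flat there would be a right $S$-module $N$ with $\Tor_1^S(N,H)\neq 0$, and the Cartan--Eilenberg associativity isomorphism (Proposition~\ref{p2}) together with the decomposition $F\cong k^{(\lambda)}$ as a $k$-vector space yields $\Tor_1^{S\otimes R}(N\otimes R,H\otimes F)\cong\bigoplus_\lambda\Tor_1^S(N,H)\neq 0$, contradicting flatness of $H\otimes F$; it then gets the other factor by the twist isomorphism, exactly as you do. You instead verify flatness of $F$ directly from the definition, via the induction functor $X\mapsto S\otimes_k X$ on short exact sequences of right $R$-modules and the natural identification $(S\otimes_k X)\otimes_{S\otimes R}(H\otimes_k F)\cong H\otimes_k(X\otimes_R F)$, finishing with faithfulness of $H\otimes_k-$ over the field $k$. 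Your identification is the degree-zero, non-derived shadow of the paper's $\Tor$ computation, so the two arguments are close in spirit, but yours avoids Proposition~\ref{p2} entirely and is the more elementary and self-contained of the two; the paper's version buys brevity once the Cartan--Eilenberg machinery is granted, and its $\Tor$ formulation generalizes directly to the higher-degree statements used elsewhere in the paper. One shared caveat: both arguments implicitly assume the ``other'' module is nonzero (you need $H\neq 0$ to conclude for $F$ and $F\neq 0$ for $H$; the paper needs $\lambda=\dim F\neq 0$), and indeed the ``only if'' direction of the statement fails if one factor is the zero module, so your explicit flagging of that degenerate case is appropriate rather than a defect.
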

\begin{proof}
Necessity: Assume conversely: If $H$ is not a flat $S$-module, there exists a right $S$-module $N$ such that $\Tor_1^S(N,H) \neq 0$. Applying Proposition~\ref{p2} to the triple $(N_{k \otimes S}, {}_{k}R_R, {}_{S-R}(H \otimes_k F))$, since
$$\Tor_1^k(N,R) = 0 = \Tor_1^R(R, H \otimes_k F),$$
we have:
\begin{align*}
   & \Tor_1^{S \otimes R}(N \otimes R, H \otimes F) \\
   & \cong \Tor_1^{k \otimes S}(N, R \otimes_R (H \otimes F)) \\
   & \cong \Tor_1^{k \otimes S}(N, (R \otimes_R F) \otimes H) \\
   & \cong \Tor_1^{k \otimes S}(N, F \otimes H) \\
   & \cong \Tor_1^{k \otimes S}(N, \oplus_\lambda H) \quad (\text{where } \lambda = \dim F) \\
   & = \oplus_\lambda \Tor_1^{S}(N,H) \neq 0.
\end{align*}

Since $H \otimes F$ is a flat $S \otimes R$-module, $\Tor_1^{S \otimes R}(N \otimes R, H \otimes F) = 0$, leading to a contradiction. Hence, $H$ is flat.

Since~$S\otimes R\cong R\otimes S$ and~$H\otimes F\cong F\otimes H$, \ $F\otimes H$ is a flat~$R\otimes S$-module. Similarly, $F$ is flat by symmetry.

Sufficiency: Follows directly from Proposition~\ref{p1}.
\end{proof}

\begin{lemma}\label{l2}
If $M$ is a flat $R$-module, $N$ is a flat $S$-module, $M_1$ is a cotorsion $R$-module, and $N_1$ is a cotorsion $S$-module, then for any $n \geq 1$,
$$\Ext_{R \otimes S}^n(M \otimes N, M_1 \otimes N_1) = 0.$$
\end{lemma}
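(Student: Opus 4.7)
The plan is to apply the Künneth-type isomorphism of Proposition~\ref{p3} to split the Ext group on the tensor algebra $R\otimes S$ into a direct sum indexed by bidegree, and then check that every summand vanishes separately. Concretely, Proposition~\ref{p3} gives
$$\Ext_{R\otimes S}^{n}(M\otimes N,\,M_{1}\otimes N_{1})\ \cong\ \bigoplus_{p+q=n}\Ext_{R}^{p}(M,M_{1})\otimes \Ext_{S}^{q}(N,N_{1}),$$
so it suffices to show that whenever $p+q=n\geq 1$, at least one of the two factors is zero.

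Next I would upgrade the cotorsion hypothesis on $M_{1}$ and $N_{1}$ from vanishing of $\Ext^{1}$ to vanishing of all higher $\Ext^{j}$ against flats. By Definition~\ref{d1}, $M_{1}$ cotorsion means $\Ext_{R}^{1}(F,M_{1})=0$ for every flat $R$-module $F$, which by Definition~\ref{d2} is exactly the statement $\cd(M_{1})\leq 0$. Proposition~\ref{p5}(3) then promotes this to $\Ext_{R}^{j}(F,M_{1})=0$ for every flat $F$ and every $j\geq 1$. Taking $F=M$ (which is flat by hypothesis) yields $\Ext_{R}^{p}(M,M_{1})=0$ for all $p\geq 1$, and the same argument applied on the $S$-side gives $\Ext_{S}^{q}(N,N_{1})=0$ for all $q\geq 1$.

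Now I would finish by a trivial parity/index argument: for $n\geq 1$ any decomposition $p+q=n$ with $p,q\geq 0$ must have $p\geq 1$ or $q\geq 1$. In the first case the left tensor factor of the summand vanishes; in the second case the right tensor factor vanishes; either way $\Ext_{R}^{p}(M,M_{1})\otimes\Ext_{S}^{q}(N,N_{1})=0$. Summing over all $(p,q)$ with $p+q=n$ gives $\Ext_{R\otimes S}^{n}(M\otimes N,M_{1}\otimes N_{1})=0$.

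There is essentially no hard step here: the whole argument is an assembly of Propositions~\ref{p3} and~\ref{p5}. The only point deserving care is the implicit identification of ``cotorsion'' with ``$\cd\leq 0$'' and the invocation of Proposition~\ref{p5}(3) to pass from vanishing of $\Ext^{1}$ to vanishing of $\Ext^{j}$ for all $j\geq 1$; one should also be mindful that Proposition~\ref{p3} requires the two algebras to be $k$-projective (automatic here since $k$ is a field), which ensures the Künneth-type formula is available without further hypothesis.
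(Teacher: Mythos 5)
Your proposal is correct and follows essentially the same route as the paper: apply Proposition~\ref{p3} to decompose $\Ext_{R\otimes S}^n$ into bidegree summands and observe that for $p+q=n\geq 1$ one factor of each summand vanishes. You are in fact slightly more careful than the paper's own proof, since you explicitly invoke Proposition~\ref{p5}(3) to pass from the vanishing of $\Ext^1$ (the definition of cotorsion) to the vanishing of all $\Ext^j$, $j\geq 1$, against flat modules, a step the paper leaves implicit.
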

\begin{proof}
By Proposition~\ref{p3},
$$\Ext_{R \otimes S}^n(M \otimes N, M_1 \otimes N_1) \cong \bigoplus_{p+q=n} \Ext_R^p(M, M_1) \otimes \Ext_S^q(N, N_1).$$

For $p + q = n \geq 1$, at least one of $p$ or $q$ is positive. Since $\Ext_R^p(M, M_1) = 0$ or $\Ext_S^q(N, N_1) = 0$, the result follows.
\end{proof}

\begin{lemma}\label{l3}
If $F$ is a flat $R \otimes S$-module, then $F$ is both a flat $R$-module and a flat $S$-module.
\end{lemma}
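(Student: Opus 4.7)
The plan is to follow the template used in the proof of Lemma~\ref{l1} and reduce everything to Proposition~\ref{p2}. Fix an arbitrary right $R$-module $M$; it suffices to show $\Tor_n^R(M,F)=0$ for every $n\geq 1$, since this will exhibit $F$ as a flat left $R$-module (where $F$ is regarded as an $R$-module by restriction along the unital $k$-algebra map $R\hookrightarrow R\otimes S$, $r\mapsto r\otimes 1$). Flatness over $S$ will then be obtained symmetrically by applying the twist isomorphism $R\otimes S\cong S\otimes R$.

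To apply Proposition~\ref{p2}, I take $\Lambda=k$, $\Gamma=R$, $\Sigma=S$, together with the triple $(M_{k-R},\,{}_{k}S_{S},\,{}_{R-S}F)$; all three algebras are $k$-projective since $k$ is a field. The two vanishing hypotheses of Proposition~\ref{p2} are immediate: $\Tor_n^k(M,S)=0$ because every module over the field $k$ is flat, and $\Tor_n^S(S,F)=0$ because $S$ is free, hence flat, over itself. Proposition~\ref{p2} then gives, for every $n\geq 1$,
\[
\Tor_n^{R\otimes S}(M\otimes_k S,\,F)\;\cong\;\Tor_n^{R}(M,\,S\otimes_S F)\;\cong\;\Tor_n^{R}(M,F).
\]
Since $F$ is a flat $R\otimes S$-module, the left-hand side vanishes for $n\geq 1$, so $\Tor_n^R(M,F)=0$, as required.

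For the $S$-side I repeat the argument with the roles of $R$ and $S$ exchanged, transporting $F$ through the twist $t\colon R\otimes S\to S\otimes R$ to regard it as a flat $S\otimes R$-module, and then running the same computation with the triple $(N_{k-S},\,{}_{k}R_{R},\,{}_{S-R}F)$ for an arbitrary right $S$-module $N$. The only point that requires care is the bookkeeping in Proposition~\ref{p2}, namely lining up the left/right bimodule structures in $(A_{\Lambda-\Gamma},\,{}_\Lambda B_\Sigma,\,{}_{\Gamma-\Sigma}C)$ so that $A\otimes_\Lambda B=M\otimes_k S$ carries the intended right $R\otimes S$-structure and $B\otimes_\Sigma C=F$ retains the intended left $R$-action; once these identifications are verified the rest of the argument is formal.
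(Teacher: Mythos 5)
Your proposal is correct and follows essentially the same route as the paper: both apply Proposition~\ref{p2} with the triple $(X_{k-R},\,{}_{k}S_{S},\,{}_{R-S}F)$ to get $\Tor_n^{R\otimes S}(X\otimes_k S,F)\cong\Tor_n^R(X,F)$ and conclude by flatness of $F$ over $R\otimes S$, with the $S$-side handled by symmetry. Your version is in fact slightly more careful, correctly stating the first vanishing hypothesis as $\Tor_n^k(M,S)=0$ where the paper writes $\Tor_i^R(X,S)$.
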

\begin{proof}
For any right $R$-module $X$, consider the triple $(X_{k-R}, {}_{k}S_S, {}_{R-S}F)$, and
$$\Tor_i^R(X, S) = 0 = \Tor_i^S(S, F)$$
Proposition~\ref{p2} gives:
$$\Tor_i^{R \otimes S}(X \otimes_k S, F) \cong \Tor_i^R(X, F).$$
As $F$ is flat over $R \otimes S$, $\Tor_i^R(X, F) = 0$. Hence, $F$ is flat over $R$. Similarly for $F$ is a flat $S$-module.
\end{proof}

\begin{theorem}\label{t1}
Let $M$ be an $R$-module and $N$ be an $S$-module. Then $M \otimes N$ is a cotorsion $R \otimes S$-module if and only if both $M$ and $N$ are cotorsion modules.
\end{theorem}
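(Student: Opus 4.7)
The plan is to prove the two implications separately. The forward direction admits a clean test-module computation via Proposition~\ref{p3}; the reverse direction is the main obstacle, requiring a reduction from tensor-product flats (where Lemma~\ref{l2} applies) to arbitrary flat $R \otimes S$-modules.

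For the forward direction, I pick any flat $R$-module $F$ and consider the test object $F \otimes_k S$, which is flat over $R \otimes S$ by Proposition~\ref{p1}(2). The cotorsion hypothesis on $M\otimes N$ then gives $\Ext_{R \otimes S}^{1}(F \otimes S,\, M \otimes N) = 0$. Applying Proposition~\ref{p3} expands this as
$$\Ext_{R}^{1}(F, M) \otimes_k \Hom_{S}(S, N) \,\oplus\, \Hom_{R}(F, M) \otimes_k \Ext_{S}^{1}(S, N),$$
which collapses to $\Ext_{R}^{1}(F, M) \otimes_k N$ since $S$ is projective over itself and $\Hom_S(S,N)\cong N$. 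Because the tensor is taken over the field $k$, assuming $N\neq 0$ forces $\Ext_{R}^{1}(F, M) = 0$; varying $F$ shows $M$ is cotorsion over $R$. A symmetric argument, testing against $R \otimes_k G$ for $G$ flat over $S$, yields $N$ cotorsion over $S$.

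For the reverse direction, suppose $M$ and $N$ are cotorsion. Given a flat $R \otimes S$-module $F$, my plan is to take a free resolution $P_\bullet \to F$ whose terms $(R \otimes S)^{(I_n)} \cong R \otimes_k S^{(I_n)}$ are manifestly tensor products of a flat $R$-module with a flat $S$-module. By Lemma~\ref{l2}, $\Ext_{R \otimes S}^{i}(P_n, M \otimes N) = 0$ for all $n$ and all $i \geq 1$, so the cohomology of $\Hom_{R \otimes S}(P_\bullet, M \otimes N)$ computes $\Ext_{R \otimes S}^{*}(F, M\otimes N)$ from a complex whose terms are individually acyclic against $M\otimes N$. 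The delicate point — and the genuine obstacle — is that Lemma~\ref{l2} only fires when the first argument of $\Ext$ has the literal form $A \otimes_k B$; the syzygies $K_n = \ker(P_n\to P_{n-1})$ are flat over $R\otimes S$ by the standard Tor-sequence argument (since $F$ is flat), but need not be of tensor-product form, so a naive dimension-shift through $K_n$ does not loop back to Lemma~\ref{l2}. The argument therefore must be carried out on $P_\bullet$ itself, either as a direct verification that $\Hom_{R \otimes S}(P_\bullet, M \otimes N)$ has vanishing positive cohomology (using Proposition~\ref{p3} termwise and the cotorsion of $M$ and $N$ against the flat $R$- and $S$-pieces of each $P_n$), or as the collapse of the hypercohomology spectral sequence of this complex. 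Verifying this reduction is the step I expect to require the most care.
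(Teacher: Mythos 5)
Your necessity direction is correct and essentially identical to the paper's: test against $F \otimes_k S$, expand with Proposition~\ref{p3}, and use that tensoring over the field $k$ with $N \neq 0$ is faithful.

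The sufficiency direction has a genuine gap, located exactly where you flag it, and the proposed workaround does not close it. Passing to a free resolution $P_\bullet \to F$ gains nothing: each $P_n \cong (R\otimes S)^{(I_n)}$ is projective over $R \otimes S$, so $\Ext^i_{R\otimes S}(P_n, X) = 0$ for all $i \geq 1$ and \emph{every} module $X$ -- Lemma~\ref{l2} is not needed for this, and the cotorsion hypotheses on $M$ and $N$ never actually enter your argument. Meanwhile $H^1$ of the complex $\Hom_{R\otimes S}(P_\bullet, M\otimes N)$ is, by definition of $\Ext$ via projective resolutions, the group $\Ext^1_{R\otimes S}(F, M\otimes N)$ that you must show vanishes; termwise acyclicity of the $P_n$ against $M\otimes N$ says nothing about the cohomology of the total complex, and the hypercohomology spectral sequence you invoke has $E_1^{p,q}=\Ext^q_{R\otimes S}(P_p, M\otimes N)$, which degenerates to $E_2^{p,0}=\Ext^p_{R\otimes S}(F,M\otimes N)$ -- it converges to the unknown rather than computing it. The essential difficulty is that $F$ is flat but not projective, and a projective resolution is blind to exactly that distinction. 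The paper takes a different route: it argues that an arbitrary flat $R\otimes S$-module $F$ is itself isomorphic to a module of tensor-product form, namely $F \otimes_k \bigl(S/\ann_S(F)\bigr)$, with $F$ flat over $R$ by Lemma~\ref{l3} and $S/\ann_S(F)$ flat over $S$ by Lemma~\ref{l1}, so that Lemma~\ref{l2} applies directly in degree $1$. Whatever one makes of the details of that reduction, some device of this kind -- bringing an arbitrary flat $R\otimes S$-module within reach of Lemma~\ref{l2} -- is precisely the missing ingredient that your outline identifies as necessary but does not supply.
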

\begin{proof}
Sufficiency: Let $F$ be any flat $R \otimes S$-module. Using the twist isomorphism $t: R \otimes S \to S \otimes R$, $F$ becomes a flat $S \otimes R$-module. The definition of the modules operation is given as~$(s\otimes r)\ast f=t^{-1}(s\otimes r)f=( r\otimes s)f$. Next, we define the mapping
$$\pi: S \otimes F \to F, \quad s \otimes f \mapsto sf.$$
Obviously,\ $\pi$ is well-defined. We prove that\ $\pi$ is a left\ $S\otimes R$-module homomorphism. For any\ $ \alpha\otimes\beta\in S\otimes R,\ s\otimes f\in S\otimes F$, we have
\begin{align*}
   & \pi((\alpha\otimes\beta)(s\otimes f)) \\
   & =\pi(\alpha s\otimes\beta f)\\
   & =(\alpha s)(\beta f)\\
   & =\beta(\alpha sf)\\
   & =(\beta\otimes\alpha)(sf)\\
   & =t^{-1}(\alpha\otimes\beta)(sf)\\
   & =t^{-1}(\alpha\otimes\beta)\pi(s\otimes f)\\
   & =(\alpha\otimes\beta)\ast\pi(s\otimes f).
\end{align*}
Since \ $\pi(1\otimes f)=1f=f$ for any\ $ f\in F$,\ $\pi$ is surjective.
$\Ker \pi = \ann_S(F) \otimes F$, where\ $\ann_S(F)=\{s\in S|~sf=0,~f\in F\}$ is the annihilator of\ $F$ in \ $S$.
Therefore, there exists a short exact sequence of $S \otimes R$-modules:
\begin{equation}\label{eq2-1}
    0 \rightarrow \ann_S(F) \otimes F \xrightarrow{\eta \otimes \text{id}_{F}} S \otimes F \rightarrow F \rightarrow 0
\end{equation}
where $\eta: \ann_S(F) \rightarrow S$ is the inclusion map.
Notice that from the short exact sequence \eqref{eq2-1}, there exists a short exact sequence of $R \otimes S$-modules:
\begin{equation}\label{eq2-2}
    0 \rightarrow F \otimes \ann_S(F) \xrightarrow{\text{id}_{F} \otimes \eta} F \otimes S \rightarrow F \rightarrow 0.
\end{equation}
Notice also that there is a short exact sequence of $S$-modules:
\begin{equation}\label{eq2-3}
    0 \rightarrow \ann_S(F) \xrightarrow{\eta} S \rightarrow S / \ann_S(F) \rightarrow 0.
\end{equation}
Applying the exact functor $F \otimes -$ to the short exact sequence \eqref{eq2-3}, we obtain:
\[
    0 \rightarrow F \otimes \ann_S(F) \xrightarrow{\text{id}_{F} \otimes \eta} F \otimes S \rightarrow F \otimes S / \ann_S(F) \rightarrow 0.
\]
Thus, there exists a commutative diagram of short exact sequences of $R \otimes S$-modules.
$$\xymatrix{
  0 \ar[r] & F\otimes\ann_S(F) \ar@{=}[d] \ar[r] &F\otimes S \ar@{=}[d] \ar[r] & F\otimes S/\ann_S(F) \ar@{-->}[d]^{\theta} \ar[r]&0 \\
  0 \ar[r] & F\otimes\ann_S(F)  \ar[r] &F\otimes S \ar[r] & F  \ar[r]&0  }$$

By the universal property of quotients, we obtain the map $\theta$ in the third column of the diagram. Then, by the Five Lemma, $\theta$ is an $R \otimes S$-module isomorphism. Consequently, we have the isomorphic extension group:
\begin{align}\label{eq2-6}
  \Ext_{R \otimes S}^1(F, M \otimes N) &\cong \Ext_{R \otimes S}^1(F \otimes S / \ann_S(F), M \otimes N)
\end{align}
Since $F \otimes S / \ann_S(F) \cong F$ is a flat $R \otimes S$-module, according to Lemma~\ref{l1}, $S / \ann_S(F)$ is a flat $S$-module. By Lemmas~\ref{l2}, Lemma \ref{l3}, and the isomorphism~(\ref{eq2-6}), we conclude that $\Ext_{R \otimes S}^1(F, M \otimes N) = 0$. Therefore, $M \otimes N$ is a cotorsion $R \otimes S$-module.

\textbf{``Necessity''}  Suppose $M$ is not a cotorsion $R$-module. Then there exists a flat $R$-module $F_0$ such that
\[
\Ext_R^1(F_0, M) \neq 0.
\]
According to Proposition~\ref{p3}, we have
\begin{align*}
  \Ext_{R \otimes S}^1(F_0 \otimes S, M \otimes N) &\cong \Ext_R^1(F_0, M) \otimes \Ext_S^0(S, N) \oplus \Ext_R^0(F_0, M) \otimes \Ext_S^1(S, N) \\
  &\cong \Ext_R^1(F_0, M) \otimes \Hom_S(S, N) \\
  &\cong \Ext_R^1(F_0, M) \otimes N.
\end{align*}
Since $\Ext_R^1(F_0, M) \neq 0$ and $N \neq 0$, it follows that $\Ext_R^1(F_0, M) \otimes N \neq 0$. However, $F_0 \otimes S$ is a flat $R \otimes S$-module, and $M \otimes N$ is a cotorsion module. Hence, $\Ext_{R \otimes S}^1(F_0 \otimes S, M \otimes N) = 0$. This leads to a contradiction. Therefore, $M$ is a cotorsion $R$-module. Similarly, it can be shown that $N$ is a cotorsion $S$-module.
\end{proof}
\begin{remark}
Notably, while injective modules are special cases of cotorsion modules, Theorem~\ref{t1} is not limited to injectives, as tensor products of injectives need not be injective (see \cite[p298]{Zhou:4}).
\end{remark}
\begin{corollary}
Let $M$ be an $R$-module and $N$ be an $S$-module. Then $M \otimes N$ is a flat cotorsion $R \otimes S$-module if and only if both $M$ and $N$ are flat cotorsion modules.
\end{corollary}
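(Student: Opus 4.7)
The plan is to observe that the condition ``flat cotorsion'' factors cleanly into two independent conditions --- being flat and being cotorsion --- and that each of these has already been characterized in tensor-product form by results earlier in this section. So the corollary should follow by nothing more than chaining the two biconditionals together.

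More concretely, I would first unpack ``$M\otimes N$ is flat cotorsion over $R\otimes S$'' as the conjunction ``$M\otimes N$ is flat over $R\otimes S$'' and ``$M\otimes N$ is cotorsion over $R\otimes S$''. For the flat half, Lemma~\ref{l1} states the analogous biconditional for $H\otimes F$ over $S\otimes R$; to transfer this to $M\otimes N$ over $R\otimes S$ I would invoke the twist isomorphism $t\colon R\otimes S\to S\otimes R$ from the preliminaries together with the canonical $k$-module isomorphism $M\otimes N\cong N\otimes M$, which together give a flatness-preserving equivalence between the two settings. This yields: $M\otimes N$ is a flat $R\otimes S$-module iff $M$ is a flat $R$-module and $N$ is a flat $S$-module.

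For the cotorsion half, Theorem~\ref{t1} already says directly that $M\otimes N$ is a cotorsion $R\otimes S$-module iff $M$ and $N$ are cotorsion modules over $R$ and $S$, respectively. Conjoining this with the flatness biconditional from the previous paragraph gives exactly the statement of the corollary.

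I do not expect any genuine obstacle here; the corollary is a packaging result. The only small technical point to be careful about is the bookkeeping between $R\otimes S$ and $S\otimes R$ when quoting Lemma~\ref{l1}, which is handled by the twist isomorphism exactly as it was in the sufficiency direction of Theorem~\ref{t1}. No further computation with $\mathrm{Tor}$ or $\mathrm{Ext}$ should be needed.
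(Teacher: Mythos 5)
Your proposal is correct and matches the paper's intent exactly: the corollary is stated without proof precisely because it is the conjunction of Lemma~\ref{l1} (the flatness biconditional, up to relabelling $R\leftrightarrow S$ or applying the twist) and Theorem~\ref{t1} (the cotorsion biconditional). No further argument is needed.
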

\section{propositions on tensor product of cotorsion dimensions}

\begin{proposition}\label{prop4-0}
Let $M$ be an $R$-module, $N$ be an $S$-module, and $m, n$ be positive integers. The following statements hold:
\begin{itemize}
\item[\upshape{(1)}] If $M$ is a cotorsion module and $\cd(N) = n$, then $\cd(M \otimes N) = n$.
\item[\upshape{(2)}] If $\cd(M) = m$ and $N$ is a cotorsion module, then $\cd(M \otimes N) = m$.
\end{itemize}
\end{proposition}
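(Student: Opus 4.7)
The plan is to prove part (1) directly and then deduce part (2) by symmetry (using the twist isomorphism $R\otimes S\cong S\otimes R$). For (1), the two bounds $\cd(M\otimes N)\leq n$ and $\cd(M\otimes N)\geq n$ will be established by different means. The upper bound uses a cotorsion resolution of $N$ combined with Theorem~\ref{t1}; the lower bound uses the existence of a flat $S$-module realizing the cotorsion dimension of $N$ combined with Proposition~\ref{p3}.

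For the upper bound, by Remark~\ref{remark2-1} there is an exact sequence of $S$-modules
$$0\rightarrow N\rightarrow C^{0}\rightarrow C^{1}\rightarrow\cdots\rightarrow C^{n}\rightarrow 0$$
with each $C^{i}$ a cotorsion $S$-module. Since $k$ is a field, every $k$-module is flat, so the functor $M\otimes_{k}-$ is exact. Applying it to the above sequence yields an exact sequence of $R\otimes S$-modules
$$0\rightarrow M\otimes N\rightarrow M\otimes C^{0}\rightarrow M\otimes C^{1}\rightarrow\cdots\rightarrow M\otimes C^{n}\rightarrow 0,$$
and since $M$ is cotorsion and each $C^{i}$ is cotorsion, Theorem~\ref{t1} tells us each $M\otimes C^{i}$ is a cotorsion $R\otimes S$-module. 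By Proposition~\ref{p5}(4), this forces $\cd(M\otimes N)\leq n$.

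For the lower bound, the hypothesis $\cd(N)=n\geq 1$ furnishes a flat $S$-module $F_{0}$ with $\Ext_{S}^{n}(F_{0},N)\neq 0$. Consider the $R\otimes S$-module $R\otimes F_{0}$, which is flat over $R\otimes S$ by Proposition~\ref{p1}. Applying Proposition~\ref{p3} gives
$$\Ext_{R\otimes S}^{n}(R\otimes F_{0},M\otimes N)\cong\bigoplus_{p+q=n}\Ext_{R}^{p}(R,M)\otimes_{k}\Ext_{S}^{q}(F_{0},N).$$
All terms with $p\geq 1$ vanish since $R$ is a projective $R$-module, so the right-hand side collapses to $\Hom_{R}(R,M)\otimes_{k}\Ext_{S}^{n}(F_{0},N)\cong M\otimes_{k}\Ext_{S}^{n}(F_{0},N)$. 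Because $k$ is a field, a tensor product over $k$ of nonzero modules is nonzero, and hence (assuming $M\neq 0$, which is implicit in the statement) this Ext group is nonzero. Therefore $\cd(M\otimes N)\geq n$, and combining both bounds gives $\cd(M\otimes N)=n$. Part (2) follows from (1) by swapping the roles of $R$ and $S$ through the twist map $t:R\otimes S\to S\otimes R$ of Section~2.

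The main obstacle, and the place where something could go wrong, is the lower bound: the upper bound is a fairly routine application of exactness of $M\otimes_{k}-$, but the lower bound relies crucially on the Künneth-type formula in Proposition~\ref{p3} together with the fact that working over a field guarantees no collapse of the tensor product to zero. The computation is only clean because $R$ itself is projective on one side, which kills all but one summand in the decomposition; otherwise the additional $\Ext_{R}^{p}(-,M)$ factors could interfere with identifying the top Ext degree.
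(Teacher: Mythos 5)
Your proof is correct, and your upper bound is essentially the paper's argument: apply the exact functor $M\otimes_k-$ to a length-$n$ cotorsion coresolution of $N$ and use Theorem~\ref{t1} to see that each term $M\otimes C^i$ is cotorsion (though the conclusion $\cd(M\otimes N)\leq n$ is really Remark~\ref{remark2-1}(1) rather than Proposition~\ref{p5}(4), which is an implication in the opposite direction). Where you genuinely diverge is the lower bound. The paper takes the \emph{minimal} cotorsion resolution of $N$, whose syzygies $\Ker\partial^i$ are not cotorsion, and uses the necessity direction of Theorem~\ref{t1} to conclude that each $M\otimes\Ker\partial^i$ is again not cotorsion, so the induced coresolution of $M\otimes N$ is still minimal of length $n$. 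You instead exhibit an explicit flat test module: a flat $S$-module $F_0$ with $\Ext_S^n(F_0,N)\neq0$, for which Proposition~\ref{p3} collapses $\Ext_{R\otimes S}^n(R\otimes F_0,M\otimes N)$ to $M\otimes_k\Ext_S^n(F_0,N)\neq 0$ because $R$ is projective over itself and $k$ is a field. Both routes rest on the same ingredients (exactness of $M\otimes_k-$ and the K\"unneth isomorphism of Proposition~\ref{p3}, which also powers Theorem~\ref{t1}), but yours is more direct and closely mirrors the paper's own proof of the necessity half of Theorem~\ref{t1}, while the paper's version yields the slightly stronger structural fact that the tensored sequence is itself a minimal cotorsion resolution of $M\otimes N$. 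Your implicit assumption $M\neq 0$ is harmless and is equally implicit in the paper.
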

\begin{proof}
(1) Since $\cd(N) = n$, we obtain the shortest cotorsion resolution of $N$:
\begin{equation}\label{eq4-00}
0 \rightarrow N \xrightarrow{\partial^{0}} D^0 \xrightarrow{\partial^{1}} D^1 \rightarrow \cdots \rightarrow D^{n-1} \xrightarrow{\partial^{n}} D^n \rightarrow 0,
\end{equation}
where $\Ker \partial_i$ is not cotorsion modules for $0 < i \leq n$. Applying the exact functor $M \otimes -$ to the exact sequence (\ref{eq4-00}), we obtain an exact sequence:
\begin{equation}\label{eq4-01}
0 \rightarrow M \otimes N \xrightarrow{\text{id}_M \otimes \partial^{0}} M \otimes D^0 \xrightarrow{\text{id}_M \otimes \partial^{1}} M \otimes D^1 \rightarrow \cdots \rightarrow M \otimes D^{n-1} \xrightarrow{\text{id}_M \otimes \partial^{n}} M \otimes D^n \rightarrow 0.
\end{equation}
Note that $\Ker(\text{id}_M \otimes \partial^i) = M \otimes \Ker \partial^i$ is not cotorsion modules for $0 < i \leq n$. Since $M$ is a cotorsion module, Theorem \ref{t1}  implies that (\ref{eq4-01}) is a cotorsion resolution of $M \otimes N$. Hence, $\cd(M \otimes N) = n$.

Similarly, the proof for (2) is analogous.
\end{proof}

\begin{proposition}
Let $M$ be an $R$-module and $N$ be an $S$-module. If $\cd(M) = 1$, $\cd(N) = 1$, then the following statements hold:
\begin{itemize}
\item[\upshape{(1)}] $0 < \cd(M \otimes N) \leq 2$ and $M \otimes N$ has a cotorsion resolution
\begin{equation}
0 \rightarrow M \otimes N \xrightarrow{\text{id}_M \oplus \partial_0} C_0 \otimes D_1 \oplus C_1 \otimes D_0 \xrightarrow{\partial_0 - \text{id}_0} C_0 \otimes D_0 \xrightarrow{\text{id}_1 \otimes \partial_1} C_1 \otimes D_1 \rightarrow 0.
\end{equation}
\item[\upshape{(2)}] $\cd(M \otimes N) = 1$ if and only if $C_0 \otimes D_1 + C_1 \otimes D_0$ is a cotorsion module.
\end{itemize}
\end{proposition}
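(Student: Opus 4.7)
The plan is to feed the two length-one cotorsion coresolutions $0 \to M \to C_0 \to C_1 \to 0$ and $0 \to N \to D_0 \to D_1 \to 0$ into Proposition~\ref{prop0} to obtain the four-term exact sequence
$$0 \to M \otimes N \to C_0 \otimes D_1 \oplus C_1 \otimes D_0 \to C_0 \otimes D_0 \to C_1 \otimes D_1 \to 0,$$
and then cash in Theorem~\ref{t1}. Every $C_i$ and $D_j$ is cotorsion by hypothesis, so each $C_i \otimes D_j$ is cotorsion by Theorem~\ref{t1}; finite direct sums preserve cotorsionness because $\Ext^1(F,-)$ distributes over finite direct sums. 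Thus the displayed sequence is a cotorsion coresolution of $M \otimes N$ of length $2$, and Remark~\ref{remark2-1} delivers $\cd(M \otimes N) \leq 2$. Strict positivity $\cd(M \otimes N) \geq 1$ is immediate from the necessity part of Theorem~\ref{t1}: since $\cd(M)=1$, $M$ is not cotorsion, and since $N \neq 0$ (because $\cd(N) = 1$), the tensor $M \otimes N$ cannot be cotorsion. This settles~(1).

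For (2), set $K := C_0 \otimes D_1 + C_1 \otimes D_0$, which by Proposition~\ref{prop0} coincides with both the image of the second map and the kernel of the third. Splice the four-term sequence into
$$0 \to M \otimes N \to C_0 \otimes D_1 \oplus C_1 \otimes D_0 \to K \to 0 \qquad (\mathrm{A})$$
and
$$0 \to K \to C_0 \otimes D_0 \to C_1 \otimes D_1 \to 0 \qquad (\mathrm{B}).$$
If $K$ is cotorsion, then $(\mathrm{A})$ is a cotorsion coresolution of $M \otimes N$ of length $1$, giving $\cd(M \otimes N) \leq 1$; combined with (1) this forces equality. Conversely, assume $\cd(M \otimes N) = 1$ and let $F$ be any flat $R \otimes S$-module. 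The middle term of $(\mathrm{A})$ is cotorsion, so by Proposition~\ref{p5} we have $\Ext^i(F, C_0 \otimes D_1 \oplus C_1 \otimes D_0) = 0$ for every $i \geq 1$; the long exact Ext-sequence attached to $(\mathrm{A})$ then yields
$$\Ext^1(F, K) \cong \Ext^2(F, M \otimes N),$$
and Proposition~\ref{p5} shows the right-hand side is zero. Hence $K$ is cotorsion.

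The main obstacle is essentially bookkeeping: identifying $K$ simultaneously as the image of one map and as the kernel of the next inside the four-term sequence is what permits $(\mathrm{A})$ and $(\mathrm{B})$ to be written as short exact sequences, and the converse direction of (2) must exploit Proposition~\ref{p5} in its stronger form (vanishing of all higher $\Ext^i$, not only $\Ext^1$) in order to transfer $\Ext^2$-information on $M \otimes N$ into $\Ext^1$-information on $K$. Beyond Proposition~\ref{prop0}, Theorem~\ref{t1}, and Proposition~\ref{p5}, no further homological input is required.
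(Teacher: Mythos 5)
Your proposal is correct and follows essentially the same route as the paper: Proposition~\ref{prop0} applied to the two length-one cotorsion coresolutions produces the four-term sequence, Theorem~\ref{t1} makes each $C_i\otimes D_j$ (hence their finite direct sum) cotorsion and rules out $M\otimes N$ itself being cotorsion, and part~(2) is read off from the spliced short exact sequence ending in $K=\Ker(d_1\otimes\partial_1)=C_0\otimes D_1+C_1\otimes D_0$ via Proposition~\ref{p5}. Your write-up is in fact slightly more explicit than the paper's (spelling out the $\Ext^1(F,K)\cong\Ext^2(F,M\otimes N)$ shift rather than just citing Proposition~\ref{p5}), but the underlying argument is identical.
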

\begin{proof}
(1) By Theorem \ref{t1}, $M \otimes N$ is not a cotorsion module. Therefore, $0 < \cd(M \otimes_k N)$. By $\cd(M) = 1$, $\cd(N) = 1$, we obtain the exact sequences
\begin{align}
0 \rightarrow M \xrightarrow{d_0} C^0 \xrightarrow{d_1} C^1 \rightarrow 0, \
0 \rightarrow N \xrightarrow{\partial^0} D^0 \xrightarrow{\partial^1} D^1 \rightarrow 0,
\end{align}
where $C^i$ and $D^j$ are cotorsion modules, $0 \leq i, j \leq 1$. Applying Proposition \ref{prop0}, in the category $\Mod(R \otimes S)$, we have a cotorsion resolution of $M \otimes N$:
\begin{equation}\label{eq4-1}
0 \rightarrow M \otimes N \xrightarrow{d_0 \oplus \partial^0} C^0 \otimes D^1 \oplus C^1 \otimes D^0 \xrightarrow{\partial^0 - d_0} C^0 \otimes D^0 \xrightarrow{d_1 \otimes \partial^1} C^1 \otimes D^1 \rightarrow 0.
\end{equation}
Therefore, $\cd(M \otimes N) \leq 2$.

(2) By the exactness of (\ref{eq4-1}), we obtain the short exact sequence
$$0 \rightarrow M \otimes N \xrightarrow{d_0 \oplus \partial^0} C^0 \otimes D^1 \oplus C^1 \otimes D^0 \rightarrow \Ker(d_1 \otimes \partial^1) \rightarrow 0,$$
where $\Ker(d_1 \otimes \partial^1) = C^0 \otimes D^1 + C^1 \otimes D^0$. By Proposition \ref{p5}, we have the desired conclusion.
\end{proof}
\begin{proposition}
Let $M$ be an $R$-module and $N$ be an $S$-module. If $\cd(M) = m$, $\cd(N) = n$, where $m, n \geq 1$ are positive integers, then $\cd(M \otimes N) \leq m + n$.
\end{proposition}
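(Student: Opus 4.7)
The plan is to prove $\cd(M \otimes N) \leq m + n$ by induction on $m \geq 1$, with $n$ arbitrary throughout. The crucial structural input is that $k$ is a field, so tensoring over $k$ is exact; hence any short exact sequence of $R$-modules stays exact after tensoring with $N$, yielding a short exact sequence of $R \otimes S$-modules (where the module structures are the ones described in Section 2).

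The main reduction is a dimension shift. Given $M$ with $\cd(M) = m \geq 1$, I would take $C$ to be the zeroth term of a minimal cotorsion resolution of $M$ (Remark~\ref{remark2-1}), producing a short exact sequence $0 \to M \to C \to M' \to 0$ of $R$-modules with $C$ cotorsion and $\cd(M') = m - 1$ (the upper bound comes from the fact that the tail of the resolution of $M$ is a cotorsion resolution of $M'$ of length $m - 1$; the matching lower bound follows from Proposition~\ref{p6} applied to the same sequence). Tensoring with $N$ gives a short exact sequence of $R \otimes S$-modules whose middle term $C \otimes N$ has cotorsion dimension exactly $n$ by Proposition~\ref{prop4-0}(1).

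For the base case $m = 1$, the cokernel $M'$ is cotorsion, so Proposition~\ref{prop4-0}(1) yields $\cd(M' \otimes N) = n$; in the inductive step, the inductive hypothesis applied to $M'$ gives $\cd(M' \otimes N) \leq (m - 1) + n$. Either way, Proposition~\ref{p6}(2) produces
\[
\cd(M \otimes N) \leq \sup\{\cd(C \otimes N),\ \cd(M' \otimes N) + 1\} \leq \sup\{n,\ m + n\} = m + n,
\]
closing the induction. I do not anticipate a substantive obstacle; the one delicate point is verifying that $\cd(M') = m - 1$ for the chosen $C$, which is handled by insisting on a minimal cotorsion resolution. A conceivable alternative would be to take the total complex of the tensor product of cotorsion resolutions of $M$ and $N$, which would yield an explicit cotorsion resolution of $M \otimes N$ of length $m + n$ (each term is a finite direct sum of modules of the form $C^{i} \otimes D^{j}$, cotorsion by Theorem~\ref{t1}); however, that route requires a separate K\"unneth-style exactness verification for the total complex and feels heavier than the short-exact-sequence induction sketched above.
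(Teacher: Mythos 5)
Your proof is correct and is essentially the paper's argument: the paper tensors the full cotorsion resolutions into a double-complex diagram, reads off $\cd(C^i\otimes N)\leq n$ from the columns (your Proposition~\ref{prop4-0}(1) step), and then applies Proposition~\ref{p6}(2) iteratively along the first row, which is exactly your induction on $m$ written out as an explicit iteration. Your packaging as a single dimension shift $0\to M\to C\to M'\to 0$ per inductive step, with the careful verification that $\cd(M')=m-1$, is a slightly cleaner presentation of the same mechanism.
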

\begin{proof}
By assumption that $\cd(M) = m$, Note \ref{remark2-1} provides the shortest cotorsion resolution of $M$:
$$0 \rightarrow M \rightarrow C^0 \rightarrow C^1 \rightarrow \cdots \rightarrow C^m \rightarrow 0,$$
where $C^i$ is cotorsion modules, $0 \leq i \leq m$. Similarly, for $N$, we have the shortest cotorsion resolution
$$0 \rightarrow N \rightarrow D^0 \rightarrow D^1 \rightarrow \cdots \rightarrow D^n \rightarrow 0,$$
where $D^j$ is cotorsion modules, $0 \leq j \leq n$. Then we can obtain the following commutative diagram with exact rows and columns, where each column forms a cotorsion resolution:
$$\xymatrix@C=1.5em@R=2em{
  & 0 \ar[d]  & 0 \ar[d] & 0 \ar[d]  & 0 \ar[d]  & 0\ar[d] &0\ar[d] &\\
  0  \ar[r] &M\otimes N \ar[d] \ar[r]^{\lambda_{0}} &  C^0\otimes N \ar[d]\ar[r]^{\lambda_{1}}& C^1\otimes N \ar[d]\ar[r]^{\lambda_{2}}&\cdots\ar[d]\ar[r]&C^{m-1}\otimes N \ar[d]\ar[r]^{\lambda_{m}}& C^m\otimes N \ar[r]\ar[d]&0\\
 0  \ar[r] &M\otimes D^0 \ar[d] \ar[r] &  C^0\otimes D^0 \ar[d]\ar[r]& C^1\otimes D^0 \ar[d]\ar[r]&\cdots\ar[r]\ar[d]&C^{m-1}\otimes D^0 \ar[d]\ar[r]& C^n\otimes D^0  \ar[r]\ar[d]&0\\
 &\vdots\ar[d]&\vdots\ar[d]&\vdots\ar[d]&\vdots\ar[d]&\vdots\ar[d]&\vdots\ar[d]& \\
  0  \ar[r] &M\otimes D^{n-1} \ar[d] \ar[r] &  C^0\otimes D^{n-1} \ar[d]\ar[r]& C^1\otimes D^{n-1} \ar[d]\ar[r]&\cdots\ar[r]\ar[d]&C^{m-1}\otimes D^0 \ar[d]\ar[r]& C^m\otimes D^{n-1}  \ar[r]\ar[d]&0\\
  0  \ar[r] &M\otimes D^{n} \ar[d]\ar[r] &  C^0\otimes D^{n}\ar[d] \ar[r]& C^1\otimes D^{n} \ar[d]\ar[r]&\cdots\ar[r]\ar[d]&C^{m-1}\otimes \ar[d]D^n\ar[r]& C^m\otimes D^{n} \ar[d] \ar[r]&0\\
 & 0  & 0  & 0   & 0   & 0&0 &  }$$
From the second column, starting from $M \otimes D^0$, each  columns forms a cotorsion resolution of $C^i \otimes N$, $0 \leq i \leq m$. Therefore, $\cd(C^i \otimes N) \leq n$, $0 \leq i \leq m$. By observing the first row and according to Proposition \ref{p6}, we have
 \begin{align*}
    \cd(\Ker \lambda_{m}) & \leq \sup\{\cd(C^{m-1}\otimes N),\cd(C^m\otimes N)+1\}\leq n+1 \\
    \cd(\Ker \lambda_{m-1}) & \leq \sup\{\cd(C^{m-2}\otimes N),\cd(\Ker \lambda_{m})+1\}\leq n+2\\
    &\vdots\\
    \cd(\Ker \lambda_{2}) & \leq \sup\{\cd(C^3\otimes N),\cd(\Ker \lambda_{3})+1\}\leq n+m-1\\
    \cd(M\otimes N)&\leq \sup\{\cd(C^0\otimes N),\cd(\Ker \lambda_{2})+1\}\leq n+m
  \end{align*}
Thus, $\cd(M \otimes N) \leq m + n$.
\end{proof}
\begin{theorem}\label{thm2}
Let $M$ be an $R$-module and $N$ be an $S$-module. The following statements are equivalent:
\begin{itemize}
\item[\upshape{(P1)}] If $\cd(M) = 1$, $\cd(N) = 1$, then $\cd(M \otimes N) = 2$.
\item[\upshape{(P2)}] If $\cd(M) = m$, $\cd(N) = n$, where $m, n \geq 1$ are positive integers, then $\cd(M \otimes N) = m + n$.
\end{itemize}
\end{theorem}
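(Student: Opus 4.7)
The plan is to establish the equivalence by induction on $s := m + n$. The implication \textup{(P2)}$\Rightarrow$\textup{(P1)} is the immediate specialization to $m = n = 1$, so the content lies entirely in \textup{(P1)}$\Rightarrow$\textup{(P2)}. I will take \textup{(P1)} itself as the base case $s = 2$ (which forces $m = n = 1$) and reduce the general case by lowering one of the two cotorsion dimensions by one.

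For the inductive step assume $s \geq 3$. By the symmetry of the tensor product I may suppose $m \geq 2$; the dual case $m = 1$, $n \geq 2$ is handled by interchanging the roles of $M$ and $N$ and invoking Proposition~\ref{prop4-0}(2) instead of (1). Since $\cd(M) = m$, the cotorsion envelope supplied by Remark~\ref{remark2-1} furnishes a short exact sequence of $R$-modules
\begin{equation*}
0 \longrightarrow M \longrightarrow C \longrightarrow M' \longrightarrow 0
\end{equation*}
with $C$ cotorsion. Applying Proposition~\ref{prop2-7} to this sequence, cases (1) and (3) are ruled out under the assumption $m \geq 2$, so case (2) pins down the exact equality $\cd(M') = m - 1 \geq 1$. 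Because $k$ is a field, the functor $- \otimes_{k} N$ is exact, so the tensored sequence
\begin{equation*}
0 \longrightarrow M \otimes N \longrightarrow C \otimes N \longrightarrow M' \otimes N \longrightarrow 0
\end{equation*}
is exact in $\Mod R \otimes S$.

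By Proposition~\ref{prop4-0}(1) we have $\cd(C \otimes N) = n$, and the inductive hypothesis applied to the pair $(M', N)$ is permissible because $\cd(M') + \cd(N) = m + n - 1 < s$ with both dimensions $\geq 1$; this yields $\cd(M' \otimes N) = m + n - 1$. Since $m \geq 2$ we have $n < m + n - 1$, so Proposition~\ref{prop2-7}(2) applied to the tensored sequence forces $\cd(M \otimes N) = (m + n - 1) + 1 = m + n$, closing the induction.

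I do not foresee a serious obstacle: the argument is essentially bookkeeping once the three ingredients (existence of a cotorsion envelope from Remark~\ref{remark2-1}, exactness of $- \otimes_{k} N$ since $k$ is a field, and the trichotomy of Proposition~\ref{prop2-7}) are lined up. The only subtlety worth flagging is that Proposition~\ref{prop2-7}(2) must deliver the \emph{equality} $\cd(M') = m - 1$ rather than merely an upper bound, which is why the dismissal of cases (1) and (3) under $m \geq 2$ (respectively $n \geq 2$ in the symmetric case) is an essential part of the step.
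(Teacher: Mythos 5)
Your proof is correct and follows essentially the same strategy as the paper's: induct on $m+n$, peel off a cotorsion envelope to get a short exact sequence lowering one dimension by one, tensor it with the other module, and combine Proposition~\ref{prop4-0} with Proposition~\ref{prop2-7}(2). The only difference is that you reduce the factor whose cotorsion dimension is at least $2$, whereas the paper (assuming $m\geq n$) always reduces $N$; your choice is the safer one, since when $n=1$ the paper's cokernel $K$ is cotorsion, the strict inequality required for Proposition~\ref{prop2-7}(2) fails, and only the upper bound of case (3) survives, so your variant in fact repairs a small gap in the published argument.
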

\begin{proof}
$(P2) \Rightarrow (P1)$. The conclusion is obvious.

$(P1) \Rightarrow (P2)$. Using mathematical induction, we konw thst (1) is ture.Without loss of generality, we assume that $m \geq n \geq 1$. Since every module admits a cotorsion preenvelope, there exists a monomorphism  $\alpha: N \rightarrow C(N)$, where $C(N)$ is a cotorsion module, and injective module also is cotorsion moudule, so $\alpha$ is a monomorphism. Therefore, there exists a short exact sequence
$$0 \rightarrow N \rightarrow C(N) \rightarrow K \rightarrow 0,$$
where $\cd(K) = n - 1 > 0$. Applying the functor $M \otimes -$ to this short exact sequence, we obtain the short exact sequence
$$0 \rightarrow M \otimes N \rightarrow M \otimes C(N) \rightarrow M \otimes K \rightarrow 0.$$
By to Proposition \ref{prop4-0}, $\cd(M \otimes C(N)) = m$. From the induction hypothesis,
$$\cd(M \otimes K) = m + (n - 1) = m + n - 1 > 1 + n > m = \cd(M \otimes C(N))$$
Therefore, by Proposition \ref{prop2-7},
$$\cd(M \otimes N) = \cd(M \otimes K) + 1 = (m + n - 1) + 1 = m + n$$
\end{proof}
Consequently, we have the following result.
\begin{corollary}
If the $k$-algebras $R$ and $S$ satisfy property \upshape{(P1)} in Theorem~\ref{thm2}, then $l.cot.D(R \otimes S) \geq l.cot.D(R) + l.cot.D(S)$.
\end{corollary}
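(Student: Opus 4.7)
The plan is to leverage the equivalence (P1) $\Leftrightarrow$ (P2) from Theorem~\ref{thm2}: once (P2) is available, the inequality for global dimensions reduces to a supremum argument applied to well-chosen tensor products. Write $a = l.cot.D(R)$ and $b = l.cot.D(S)$; the objective is to exhibit an $R \otimes S$-module whose cotorsion dimension is at least $a + b$.

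First consider the main case $a \geq 1$ and $b \geq 1$. If both are finite, then since the cotorsion dimension of any module is a non-negative integer and a bounded set of such integers attains its supremum, one can pick an $R$-module $M$ with $\cd(M) = a$ and an $S$-module $N$ with $\cd(N) = b$. Property (P2) then gives $\cd(M \otimes N) = a + b$, so $l.cot.D(R \otimes S) \geq a + b$. If instead one of the dimensions is infinite, say $a = \infty$, then for each integer $k \geq 1$ one chooses an $R$-module $M_k$ with $\cd(M_k) = k$, fixes an $S$-module $N$ with $\cd(N) = 1$ (available since $b \geq 1$), and applies (P2) to obtain $\cd(M_k \otimes N) = k + 1$; this forces $l.cot.D(R \otimes S) = \infty$, and the inequality is immediate.

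It remains to handle the boundary case in which one of $a, b$ vanishes; by symmetry assume $a = 0$, so that $a + b = b$ and every $R$-module (in particular $R$ itself) is cotorsion. For any $S$-module $N$, Proposition~\ref{prop4-0}(1) yields $\cd(R \otimes N) = \cd(N)$ when $\cd(N) \geq 1$, while Theorem~\ref{t1} covers the case $\cd(N) = 0$ directly, forcing $R \otimes N$ to be cotorsion. Taking the supremum over $N$ then gives $l.cot.D(R \otimes S) \geq b$, as required.

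The substantive content is entirely packaged into (P2) and Theorem~\ref{t1}; the main obstacle is merely bookkeeping at the boundary, since (P2) requires both input cotorsion dimensions to be strictly positive and therefore does not cover the edge cases where one of the global dimensions is $0$ or $\infty$. These are dispatched respectively by Proposition~\ref{prop4-0} and by taking a cofinal sequence of $R$-modules of arbitrarily large cotorsion dimension.
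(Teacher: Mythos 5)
The paper gives no proof of this corollary at all (it is asserted as an immediate consequence of Theorem~\ref{thm2}), so there is no argument to compare against line by line; your proposal is essentially the intended route. Your main case is correct: when $1\leq a,b<\infty$ the suprema are attained because a nonempty set of non-negative integers that is bounded above attains its supremum, and (P2) applied to witnesses $M$, $N$ gives $\cd(M\otimes N)=a+b$. The boundary case $a=0$ via Proposition~\ref{prop4-0} and Theorem~\ref{t1} is also fine.

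The one genuine gap is your treatment of the infinite case. From $l.cot.D(R)=\infty$ you cannot simply ``choose $M_k$ with $\cd(M_k)=k$ for every $k$'': the supremum could be infinite because a single module has infinite cotorsion dimension, and nothing in the paper guarantees that such a ring also admits modules of every finite cotorsion dimension (dimension shifting along a cotorsion resolution of a module of infinite cotorsion dimension only produces further modules of infinite cotorsion dimension, by Proposition~\ref{p6}). A clean repair is to prove the unconditional inequality $l.cot.D(R\otimes S)\geq l.cot.D(R)$ directly: for any $R$-module $M$ and any flat $R$-module $F$, Proposition~\ref{p3} gives $\Ext^n_{R\otimes S}(F\otimes S, M\otimes S)\cong \Ext^n_R(F,M)\otimes_k \Hom_S(S,S)$, which is nonzero exactly when $\Ext^n_R(F,M)$ is, while $F\otimes S$ is flat over $R\otimes S$ by Proposition~\ref{p1}; hence $\cd(M\otimes S)\geq\cd(M)$ for every $M$. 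This disposes of the case $a=\infty$ (and, by symmetry, $b=\infty$) without invoking (P2), and the remainder of your argument then stands.
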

\bibliographystyle{amsplain}

\end{document}